\let\mathcal \undefined
\def\mathcal{\mathscr}
\let\emptyset \undefined
\let\ge       \undefined
\let\le       \undefined
\theoremstyle{plain}
\newtheorem{theorem}{Theorem}[section]
\newtheorem{corollary}[theorem]{Corollary}
\newtheorem{lemma}[theorem]{Lemma}
\newtheorem{proposition}[theorem]{Proposition}
\theoremstyle{remark}
\newtheorem{remark}[theorem]{Remark}
\newtheorem{example}[theorem]{Example}
\newtheorem{assumption}[theorem]{Assumption}
\numberwithin{equation}{section}
\def\R{{\mathbb R}}
\def\C{{\mathbb C}}
\newcommand{\E}{{\mathbb E}}
\newcommand{\F}{{\mathscr F}}
\newcommand{\e}{\varepsilon}
\newcommand{\eps}{\varepsilon}
\newcommand{\beq}{\begin{equation}}
\newcommand{\eeq}{\end{equation}}
\newcommand{\bal}{\begin{aligned}}
\newcommand{\eal}{\end{aligned}}
\newcommand{\ben}{\begin{enumerate}}
\newcommand{\beni} {\begin{enumerate}[(i)]}
\newcommand{\een}{\end{enumerate}}
\newcommand{\bit}{\begin{itemize}}
\newcommand{\eit}{\end{itemize}}
\newcommand{\beqw}{\begin{equation*}}
\newcommand{\eeqw}{\end{equation*}}
\newcommand{\bthm}{\begin{theorem}}
\newcommand{\ethm}{\end{theorem}}
\newcommand{\bpr}{\begin{proposition}}
\newcommand{\epr}{\end{proposition}}
\newcommand{\ble}{\begin{lemma}}
\newcommand{\ele}{\end{lemma}}
\newcommand{\blem}{\begin{lemma}}
\newcommand{\elem}{\end{lemma}}
\newcommand{\bpf}{\begin{proof}}
\newcommand{\epf}{\end{proof}}
\newcommand{\bex}{\begin{example}}
\newcommand{\eex}{\end{example}}
\newcommand{\bre}{\begin{example}}
\newcommand{\ere}{\end{example}}
\newcommand{\bma}{\begin{bmatrix}}
\newcommand{\ema}{\end{bmatrix}}
\newcommand{\Dom}{{\mathsf D}}
\newcommand{\Ran}{{\mathsf R}}
\newcommand{\Ker}{{\mathsf N}}
\newcommand{\calL}{{\mathscr L}}
\newcommand{\n}{\Vert}
\newcommand{\one}{{{\bf 1}}}
\newcommand{\embed}{\hookrightarrow}
\newcommand{\s}{^*}
\newcommand{\lb}{\langle}
\newcommand{\rb}{\rangle}
\newcommand{\ov}{\overline}
\begin{document}

\title[$L^p$-Poincar\'e inequality for Ornstein-Uhlenbeck operators]{The $L^p$-Poincar\'e inequality for analytic 
Ornstein-Uhlenbeck operators}

\author{Jan van Neerven}

 \begin{abstract} Consider the linear stochastic evolution equation
$$ dU(t) = AU(t)\,dt + \,dW_H(t), \quad t\ge 0,$$
where $A$ generates a $C_0$-semigroup on a Banach space $E$ and $W_H$ is a cylindrical Brownian motion
in a continuously embedded Hilbert subspace $H$ of $E$. Under the assumption that the solutions to 
this equation admit an invariant measure
$\mu_\infty$ we prove that if the associated Ornstein-Uhlenbeck semigroup is analytic and has compact resolvent,  
then the Poincar\'e inequality 
$$ \n f - \ov f\n_{L^p(E,\mu_\infty)} \le \n D_H f\n_{L^p(E,\mu_\infty)} $$
holds for all $1<p<\infty$. Here 
$\ov f$ denotes the average of $f$ with respect to $\mu_\infty$ and 
$D_H$ the Fr\'echet derivative in the direction of $H$.
 \end{abstract}

\keywords{Analytic Ornstein-Uhlenbeck semigroups, Poincar\'e inequality, compact resolvent, joint functional calculus}

\subjclass[2000]{Primary 47D07; Secondary: 35R15, 35R60}

\date\today

\thanks{The author gratefully acknowledges financial support by VICI subsidy 639.033.604
of the Netherlands Organisation for Scientific Research (NWO)}

\maketitle

\section{Introduction}

 Let $E$ be a real Banach space and let $H$ be a Hilbert subspace of $E$, with continuous embedding  $i: H\embed E$.
Let $A$ be the generator of a $C_0$-semigroup $S = (S(t))_{t\ge 0}$ on $E$ and let 
$W_H$ be a cylindrical Brownian motion in $H$. 
Under the assumption that the linear stochastic evolution equation
\begin{equation}\label{eq:SCP} dU(t) = AU(t) + \,dW_H(t), \quad t\ge 0,\end{equation}
has an invariant measure $\mu_\infty$,  
we wish to establish sufficient conditions for the validity of the Poincar\'e inequality
$$ \n f - \ov f\n_{L^p(E,\mu_\infty)} \le C \n D_H f\n_{L^p(E,\mu_\infty)}, \quad 1<p<\infty.$$
Here $\ov f$ denotes the average of $f$ with respect to $\mu_\infty$ and 
$D_H$ the directional Fr\'echet derivative in the direction of $H$ (see \eqref{eq:DH} below).
To the best of our knowledge, this problem has been considered so far only for $p=2$ and Hilbert spaces $E$.
For this setting,
Chojnowska-Michalik and Goldys \cite{CMG} obtained various necessary and sufficient
conditions for the inequality to be true. 
Here we show that these conditions are equivalent to another, formally weaker, condition and that 
these equivalent conditions imply the validity of the Poincar\'e inequality for all $1<p<\infty$ (Theorem \ref{thm:poincare-main}). 
Our proof depends crucially on the $L^p$-gradient estimates 
for analytic Ornstein-Uhlenbeck semigroups obtained in the recent papers \cite{MaaNee09,MaaNee11}.

Related $L^p$-Poincar\'e inequalities have been proved in various other settings,
e.g. for the classical Ornstein-Uhlenbeck semigroup
(this corresponds to the case $A=-I$ of the setting considered here) \cite[Eq. (2.5)]{Pis}, 
for the Walsh system \cite{EfrLP}, and in certain non-commutative situations \cite{JunZen, Zen}.
Poincar\'e inequalities are intimately related to other functional inequalities such as,
log-Sobolev inequalities and transportation cost inequalities, and imply concentration-of-measure 
inequalities. For a comprehensive study of these topics we refer the reader to the recent
monograph of Bakry, Gentil and Ledoux \cite{BGL}.

As an application of Theorem \ref{thm:poincare-main} we find that the $L^p$-Poincar\'e inequality holds 
if the Ornstein-Uhlenbeck semigroup $P$ associated with \eqref{eq:SCP} (see \eqref{eq:P}) is analytic on $L^p(E,\mu_\infty)$ and 
has compact resolvent. 
In Section \ref{sec:ex} we provide some examples
in which the various assumptions are satisfied. In the final Section \ref{sec:comp} we address 
the problem of compactness of certain tensor products of resolvents naturally associated with $P$. 
  
All vector spaces are real. We will always identify Hilbert spaces with their dual via the Riesz representation theorem. 
The domain, kernel, and range of a linear operator $A$ will be denoted by $\Dom(A)$, $\Ker(A)$, and $\Ran(A)$, respectively.
We write $a\lesssim b$ to mean that there exists a constant $C$, independent of $a$ and $b$, such that $a\le Cb$.

\section{The $L^p$-Poincar\'e inequality}\label{sec:main}

 Throughout this note we fix a Banach space $E$ and a Hilbert subspace $H$ of $E$,
  with continuous embedding  $i: H\embed E$, and make the following assumption.

\begin{assumption}\label{ass:mu-infty}
 There exists a centred Gaussian Radon measure $\mu_\infty$ on $E$ whose covariance operator $Q_\infty\in\calL(E\s,E)$ is given by 
$$\lb Q_\infty x\s, y\s\rb = \int_0^\infty \lb Q S\s(s)x\s, S\s(s) y\s\rb \,ds, \quad x\s,y\s\in E\s.$$
\end{assumption}

\medskip
\noindent Here $Q:= i\circ i\s$; we identify $H$ and its dual in the usual way.
The convergence of the integrals on the right-hand side is part of the assumption. As is well known,
Assumption \ref{ass:mu-infty} is equivalent to the existence of an invariant measure for the problem
\eqref{eq:SCP}; 
we refer the reader to \cite{DaPZab, GolNee} for the details. In fact, the measure $\mu_\infty$ is the minimal (in the sense
of covariance domination) invariant measure for \eqref{eq:SCP}.

The formula
\begin{align}\label{eq:P} P(t)f(x) = \E (f(U(t,x))),\quad t\ge 0, \ x\in E,
\end{align}
where $U(t,x)$ denotes the unique mild solution of \eqref{eq:SCP} with initial value $x$,
defines 
a semigroup of linear contractions $P = (P(t))_{t\ge 0}$ 
on the space $B_{\rm b}(E)$ of bounded real-valued Borel functions on $E$.
This semigroup 
is called the {\em Ornstein-Uhlenbeck semigroup} associated with the pair $(A,H)$. By an easy application of H\"older's inequality, 
this semigroup extends uniquely to   
$C_0$-semigroup of contractions on $L^p(E,\mu_\infty)$, which we shall also denote by $P$.
Its generator will be denoted by $L$.

By a result of Chojnowska-Michalik and Goldys \cite{ChojGol96, CMG} (see \cite{Nee98} for the formulation of this result in its
present generality), the reproducing kernel Hilbert space
$H_\infty$ associated with the measure $\mu_\infty$ is invariant under the semigroup $S$ and the restriction of $S$ 
is a $C_0$-semigroup
of contractions on $H_\infty$. We shall denote this restricted semigroup by $S_\infty$ and its generator by $A_\infty$.
The inclusion mapping $H_\infty\embed E$ will be denoted by $i_\infty$; recall that $Q_\infty = i_\infty\circ~i_\infty\s$
(see \cite{GolNee, Nee98}).

It has been shown in \cite{ChojGol96} (see also \cite{Nee98, Nee05}) that $P(t)$ is the so-called 
{\em second quantisation}
of the adjoint semigroup $S_\infty\s(t)$. More precisely, the Wiener-It\^o isometry establishes an isometric identification 
$L^2(E,\mu_\infty) = \bigoplus_{n\ge 0}H_\infty^{{\hbox{\tiny\textcircled{s}}}n}$,
where $H_\infty^{\hbox{\tiny\textcircled{s}}n}$ is the $n$-fold symmetric tensor product of $H_\infty$
(the so-called $n$-th Wiener-It\^o chaos), and under this isometry
we have $$ P(t) = \bigoplus_{n\ge 0} S_\infty^{*{\hbox{\tiny\textcircled{s}}n}}(t).$$
We have $H_\infty^{\hbox{\tiny\textcircled{s}}0} = \R\one$ (by definition) and $H_\infty^{\hbox{\tiny\textcircled{s}}1} = H_\infty$. 
The latter identification allows us to deduce many properties of $P$ from the corresponding properties of
$S_\infty\s$ and vice versa and will be used freely in what follows.

Following \cite{CerGoz, GolNee} we define
$\F^k$ as the space of all functions $f:E\to\R$ of the 
form
\begin{equation}
\label{eq:cyl}
f(x) = \phi(\lb x,x_1\s\rb, \hdots,\lb x,x_d\s\rb)
\end{equation}
for some $d\ge 1$, 
with $x_j\s\in E\s$ for
all $j=1,\hdots,d$ with $\phi\in C_{\rm b}^k(\R^d)$.
Let 
$$\F_A^k = \{f\in \F^k: \ \hbox{$x_j\s\in\Dom(A\s)$ for
all $j=1,\hdots,d$ and }\lb\, \cdot\,, A\s Df(\cdot)\rb\in C_{\rm b}(E)\}.$$
It follows from \cite{CerGoz, GolNee} that $\F_A^2$ is a core for $\Dom(L)$ in each $L^p(E,\mu_\infty)$
 and that for $f,g\in \F_A^2$ we have the identity
\begin{align}\label{eq:L-form} 
\lb Lf,g\rb  + \lb Lg,f\rb = -\int_E \lb D_H f, D_H g\rb\,d\mu_\infty.
\end{align}
Here $D_H$ denotes the Fr\'echet derivative in the direction of $H$, defined on $\F^1$ 
by
\begin{align}\label{eq:DH} D_H f(x) := \sum_{j=1}^n \frac{\partial \phi}{\partial x_j}(\lb
x,x_1\s\rb,\dots,\lb x,x_n\s\rb) \, i\s x_j\s
\end{align}
with $f$ and $\phi$ as in \eqref{eq:cyl}. It should be emphasised that $D_H$ is not always closable; 
various conditions for closability as well as a counterexample are given in \cite{GGN}.
If $P$ is analytic on $L^p(E,\mu_\infty)$ for some/all $1<p<\infty$ (the equivalence
being a consequence of the Stein interpolation theorem), then $D_H$ is
closable as an operator from $L^p(E,\mu_\infty)$ to $L^p(E,\mu_\infty;H)$
\cite[Proposition 8.7]{GolNee}.

The following necessary and sufficient condition for the $L^2$-Poincar\'e inequality is
essentially due to Chojnowska-Michalik and Goldys \cite{ChojGol99}
(see also \cite[Proposition 10.5.2]{DPZ02}). Since the present formulation is slightly more general,
for the convenience we include the proof which follows the lines of  \cite{ChojGol99}.

\begin{proposition}[Poincar\'e inequality, the case $p=2$]\label{prop:poincare} 
Let Assumption \ref{ass:mu-infty} hold and fix a number $\omega>0$. 
If $D_H$ is closable as a densely defined operator in $L^2(E,\mu_\infty)$, then the following assertions are equivalent:
\begin{enumerate}[\rm(1)]
 \item $\n S_\infty(t)\n \le e^{-\omega t}$ for all $t\ge 0$;
 \item The Poincar\'e inequality  
$$ \n f - \ov f\n_{L^2(E,\mu_\infty)} \le \frac1{\sqrt{2\omega}}\n D_H f\n_{L^2(E,\mu_\infty)} , \quad f\in\Dom(D_H),$$
holds. Here, $\overline f = \int_E f\,d\mu_\infty$.
\end{enumerate}
\end{proposition}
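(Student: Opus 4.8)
The plan is to reformulate assertion (2) as an exponential decay estimate for the semigroup $P$ on the mean-zero subspace, and then to read off that decay from the Wiener-It\^o chaos decomposition. Put $g := f-\ov f$, so that $\ov g = 0$ and $D_H g = D_H f$. Taking $f=g$ in the form identity \eqref{eq:L-form} gives
$$ \lb Lg,g\rb = -\tfrac12\n D_H g\n_{L^2(E,\mu_\infty;H)}^2, $$
so that (2) is equivalent to the numerical-range bound $\lb Lg,g\rb \le -\omega\n g\n^2$ for all mean-zero $g$ (first on the core $\F_A^2$, then on $\Dom(D_H)$ by closure). By a standard dissipativity argument (Lumer--Phillips, differentiating $\n P(t)g\n^2$ at $t=0$ for one direction and integrating for the other) this is in turn equivalent to $\n P(t)g\n_{L^2(E,\mu_\infty)} \le e^{-\omega t}\n g\n_{L^2(E,\mu_\infty)}$ for all $t\ge 0$ and all $g$ in the mean-zero subspace $\mathcal H_0 := \{g:\ov g=0\}$. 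Thus the whole proposition reduces to showing that this exponential decay of $P|_{\mathcal H_0}$ is equivalent to (1).

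For this reduction I would invoke the chaos decomposition directly. Since $\mathcal H_0 = \bigoplus_{n\ge 1} H_\infty^{\odot n}$ and $P(t)=\bigoplus_{n\ge 0} S_\infty^{*\odot n}(t)$, the restriction $P|_{\mathcal H_0}$ is block diagonal, whence $\n P(t)|_{\mathcal H_0}\n = \sup_{n\ge 1}\n S_\infty^{*\odot n}(t)\n$. The block $n=1$ is simply $S_\infty^*(t)$, whose operator norm equals $\n S_\infty(t)\n$; this already yields the implication (2) $\Ra$ (1), since $\n S_\infty(t)\n = \n S_\infty^{*\odot 1}(t)\n \le \n P(t)|_{\mathcal H_0}\n \le e^{-\omega t}$.

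For the converse (1) $\Ra$ (2) the point is to dominate all higher blocks by the first. Here I would use that on a Hilbert space the full tensor power satisfies $\n T^{\otimes n}\n = \n T\n^n$, and that the symmetric power $T^{\odot n}$ is the restriction of $T^{\otimes n}$ to the invariant subspace $H_\infty^{\odot n}$, so that $\n S_\infty^{*\odot n}(t)\n \le \n S_\infty^*(t)\n^n = \n S_\infty(t)\n^n$. Assuming (1) this gives $\n S_\infty^{*\odot n}(t)\n \le e^{-n\omega t} \le e^{-\omega t}$ for every $n\ge 1$, hence $\n P(t)|_{\mathcal H_0}\n \le e^{-\omega t}$, which is (2).

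I expect the main obstacle to lie in the first reduction rather than in the tensor-power estimate. One must carefully pass between the Poincar\'e inequality on $\Dom(D_H)$, the form bound on the core $\F_A^2$, and the semigroup decay on $\mathcal H_0$, while keeping track of two features: that $P$ need not be self-adjoint, so that only the symmetric part of $L$ enters $\lb Lg,g\rb$ and \eqref{eq:L-form} suffices; and that \eqref{eq:L-form} is a priori available only on $\F_A^2$. These are closure and density arguments using that $\F_A^2$ is a core for $\Dom(L)$ and that $D_H$ is assumed closable. By contrast, the identity $\n S_\infty^{*\odot n}(t)\n \le \n S_\infty(t)\n^n$ and the resulting domination of all Wiener chaoses by the first are elementary, and it is precisely this that makes the first-chaos condition (1) both necessary and sufficient.
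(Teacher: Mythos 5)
Correct, and essentially the paper's own argument: your implication (1)$\Rightarrow$(2) rests on the second-quantisation estimate $\n S_\infty^{*\odot n}(t)\n \le \n S_\infty\s(t)\n^n = \n S_\infty(t)\n^n$ on the $n$-th Wiener chaos combined with \eqref{eq:L-form} and a dissipativity (Lumer--Phillips) argument, and your (2)$\Rightarrow$(1) reduces to the first chaos, exactly as in the paper. The only cosmetic difference is in (2)$\Rightarrow$(1): the paper specialises the Poincar\'e inequality directly to linear functionals $x\s\in\Dom(A\s)$ and deduces dissipativity of $A_\infty\s+\omega$ on $H_\infty$, whereas you first upgrade the inequality to the decay bound $\n P(t)\n\le e^{-\omega t}$ on the whole mean-zero subspace and then restrict to the first-chaos block; both steps hinge on the same identification of the first chaos with $H_\infty$, on which $P(t)$ acts as $S_\infty\s(t)$. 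One caution: your extension of the inequality from $\F_A^2$ to $\Dom(D_H)$ ``by closure'' is not justified by closability alone, since $\F_A^2$ is a core for $\Dom(L)$ but is not known to be a core for $D_H$; as in the paper, one must first extend the inequality (routinely but tediously) from $\F_A^2$ to $\F^1$, which is the defining core of the closure of $D_H$, and only then invoke closedness---this is precisely the step the paper flags as ``routine (albeit somewhat tedious)''.
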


\begin{proof}
(1)$\Rightarrow$(2): 
Since $t\mapsto e^{\omega t}S_\infty\s(t)$ is a $C_0$-contraction semigroup, by second quantisation
the same is true for the direct sum for $n\ge 1$ of their $n$-fold symmetric tensor products,  
$ \bigoplus_{n\ge 1} e^{n\omega t} S_\infty^{*{\hbox{\tiny\textcircled{s}}n}}(t).$
Replacing $e^{n\omega t}$ by $e^{\omega t}$,
the resulting direct sum $ \bigoplus_{n\ge 1} e^{\omega t} S_\infty^{*{\hbox{\tiny\textcircled{s}}n}}(t)$
is contractive as well. This semigroup is generated by the part $L_0+\omega$ of $L+\omega$ in 
$L_0^2(E,\mu_\infty):= L^2(E,\mu_\infty)\ominus\R\one$. Thus
we obtain the dissipativity inequality $$-\lb (L_0+\omega)f,f\rb\ge 0, \quad f\in \Dom(L_0).$$ 
In view of \eqref{eq:L-form}, this gives the inequality 
$$\omega\n f\n_2^2 \le -\lb L_0 f,f\rb =  \frac12\n D_H f\n_2^2, \quad f\in \Dom(L_0)\cap\F_A^2.$$
As a consequence,
\begin{align}\label{eq:Poinc-2} 
\omega\n f - \ov f\n_2^2 \le  \frac12\n D_H f\n_2^2, \quad f\in \F_A^2.
\end{align}

It is routine (albeit somewhat tedious) to check that the inequality \eqref{eq:Poinc-2}
extends to $f\in \F^1$, and since by definition this 
is a core for $\Dom(D_H)$ it extends to arbitrary elements $g\in \Dom(D_H)$.

\smallskip 
(2)$\Rightarrow$(1):  
Every $x\s\in E\s$, when viewed as an element of $L^2(E,\mu_\infty)$, satisfies
$ D_H x\s = i\s x\s$. Moreover,  if $x\s\in \Dom(A\s)$, then $A_\infty\s x\s\in \Dom(A_\infty\s)$,
and therefore (identifying $H_\infty$ with the first Wiener-It\^o chaos) $ x\s\in \Dom(L)$ as an element of $L^2(E,\mu_\infty)$. 

By specialising the Poincar\'e inequality to functionals $x\s$ we obtain the inequality
$$ \n i_\infty\s x\s\n = \n x\s\n_{L^2(E,\mu_\infty)} \le \frac1{\sqrt{2\omega}} \n i\s x\s\n, 
\quad x\s\in \Dom(A\s).$$
In the same way, \eqref{eq:L-form} takes the form
$$ \lb A_\infty\s i_\infty\s x\s,i_\infty\s x\s\rb =  -\frac12 \n i\s x\s\n^2, \quad x\s\in \Dom(A\s).$$
Combining these inequalities, we obtain
$$ -\lb A_\infty\s i_\infty\s x\s ,i_\infty\s x\s\rb \ge \omega\n i_\infty\s x\s\n^2, \quad x\s\in \Dom(A\s).$$
Since the elements $i_\infty\s x\s$ with $x\s\in \Dom(A\s)$ form a core for $\Dom(A_\infty\s)$,   
this is equivalent to saying that $A_\infty\s + \omega$ is dissipative on $H_\infty$. It follows that
$\n S_\infty\s(t)\n \le \exp(-\omega t)$ for all $t\ge 0.$
\end{proof}

The main result of this note (Theorem \ref{thm:poincare-main})  asserts that if $P$ is analytic
and $A_\infty\s$ has closed range, 
then all conditions of
Proposition \ref{prop:poincare} are satisfied 
and the Poincar\'e inequality extends to $L^p(E,\mu_\infty)$ for all $1<p<\infty$. 
To prepare for the proof we need to recall some preliminary facts.
We begin by imposing the following assumption, which will be in force for the rest of this section.

\begin{assumption}\label{ass:L-analytic}
For some (equivalently, for all) $1<p<\infty$ the semigroup $P$ extends to an analytic $C_0$-semigroup on $L^p(E,\mu_\infty)$. 
\end{assumption}
The problem of analyticity of $P$ has been studied in several articles \cite{Fuh, Gol, GolNee, MaaNee07, MaaNee11}.
In these, necessary and sufficient conditions for analyticity can be found. 
We have already mentioned the fact that 
if $P$ is analytic on $L^p(E,\mu_\infty)$ for some/all $1<p<\infty$, then $D_H$ is
closable as an operator from $L^p(E,\mu_\infty)$ to $L^p(E,\mu_\infty;H)$. 
In what follows, $D_H$ will always denote this closure and
$\Dom(D_H)$ its domain in $L^p(E,\mu)$. Note that there is a slight abuse of notation here,
as $\Dom(D_H)$ obviously depends on $p$. The choice of $p$ will always be clear from the context,
and for this reason we prefer not to overburden notations. The same slight abuse of notation
applies to the notation $\Dom(L)$ for the domain of $L$ in $L^p(E,\mu_\infty)$.

From \cite{MaaNee07} we know that if $P$ is analytic, then the generator $L$ of $P$ can be represented as
\begin{align}\label{eq:L} 
L = D_H\s B D_H 
\end{align}
for a unique bounded operator $B$ on $H$ which satisfies $$B+B\s = -I.$$ 
The rigorous interpretation of \eqref{eq:L} is that for $p=2$ the operator $-L$ is the sectorial operator
associated with the closed continuous accretive form
$$ (f,g)\mapsto -\lb BD_H f, D_H g\rb.$$

In the sequel we will use the standard fact (which is proved by hypercontractivity arguments)
that for each $n\ge 0$ the summand $H_\infty^{\hbox{\tiny\textcircled{s}}n}$ in the Wiener-It\^o
decomposition for $L^2(E,\mu_\infty)$ is contained as a closed subspace in $L^p(E,\mu_\infty)$ for all
$1<p<\infty$. In view of this we will continue to refer to $H_\infty^{\hbox{\tiny\textcircled{s}}n}$
as the $n$-th Wiener chaos.
By an interpolating argument (see \cite[Lemma 4.2]{Nee05})
we obtain the estimate $\n P(t)\n_p \le \n S_\infty(t)\n^{n\theta_p}$ on each of these subspaces, with a constant $0<\theta_p<1$ depending
only on $p$. Summing over $n\ge 1$ and passing to the closure of the linear span, we obtain the estimate
\begin{align}\label{eq:ues} 
\n P(t)\n_p \le \n S_\infty(t)\n^{\theta_p} \ \hbox{ on $L^p(E,\mu_\infty)\ominus \R\one$}. 
\end{align}

\begin{theorem}[$L^p$-Poincar\'e inequality]\label{thm:poincare-main}
Let Assumptions \ref{ass:mu-infty} and  \ref{ass:L-analytic} hold. 
Then the following assertions are equivalent:
\begin{enumerate}[\rm(1)]
 \item $A_\infty\s$ has closed range;
 \item there exists $\omega>0$ such that $\n S_\infty(t)\n \le e^{-\omega t}$ for all $t\ge 0$;
 \item there exist $M\ge 1$ and $\omega>0$ such that $\n S_\infty(t)\n \le Me^{-\omega t}$ for all $t\ge 0$; 
 \item there exist $M\ge 1$ and $\omega>0$ such that $\n S_H(t)\n \le Me^{-\omega t}$ for all $t\ge 0$;
% \item There exists $\omega>0$ such that $\n S_H(t)\n \le e^{-\omega t}$ for all $t\ge 0$; 
 \item $H_\infty$ embeds continuously in $H$;
 \item for some $1<p<\infty$ there exists a finite constant $C\ge 0$ such that  
$$ \n f - \ov f\n_{L^p(E,\mu_\infty)} \le C_p\n D_H f\n_{L^p(E,\mu_\infty)},\quad  f\in \Dom(D_H);$$ 
 \item for all $1<p<\infty$ there exists a finite constant $C\ge 0$ such that  
$$ \n f - \ov f\n_{L^p(E,\mu_\infty)} \le C_p\n D_H f\n_{L^p(E,\mu_\infty)},\quad  f\in \Dom(D_H).$$  
\end{enumerate}
\end{theorem}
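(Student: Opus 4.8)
The plan is to separate a purely operator-theoretic circle among \textup{(1)}--\textup{(5)} from the genuinely $L^p$ statements, proving \textup{(2)}$\Rightarrow$\textup{(7)}$\Rightarrow$\textup{(6)}$\Rightarrow$\textup{(2)} to tie the latter back in. The starting observation is that restricting the second-quantised representation of $P(t)$ to the first Wiener chaos, where it acts as $S_\infty^*(t)$, turns Assumption \ref{ass:L-analytic} into analyticity of $S_\infty^*$, hence of $S_\infty$, as a $C_0$-contraction semigroup on $H_\infty$; consequently $-A_\infty^*$ is sectorial of angle $<\pi/2$. The one identity I would use repeatedly is the first-chaos specialisation of \eqref{eq:L-form}, namely $\lb -A_\infty^* i_\infty^* x^*,\,i_\infty^* x^*\rb=\tfrac12\n i^* x^*\n^2=\tfrac12\lb Qx^*,x^*\rb$, together with $\n i_\infty^* x^*\n^2=\lb Q_\infty x^*,x^*\rb$, both valid on the core $\{i_\infty^* x^*:x^*\in\Dom(A^*)\}$ of $H_\infty$.

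For the operator-theoretic circle I would proceed as follows. Evaluating the dissipativity $\lb -A_\infty^* h,h\rb\ge\omega\n h\n^2$ of \textup{(2)} on the core and inserting the two identities gives $\tfrac12\lb Qx^*,x^*\rb\ge\omega\lb Q_\infty x^*,x^*\rb$, i.e. $Q\ge2\omega Q_\infty$; by Douglas' range-inclusion theorem this is exactly the continuous embedding $H_\infty\embed H$ of \textup{(5)}, and reading the chain backwards yields \textup{(5)}$\Rightarrow$\textup{(2)}. Next, \textup{(2)}$\Rightarrow$\textup{(3)} is trivial and \textup{(3)}$\Rightarrow$\textup{(1)} holds because exponential stability makes $\int_0^\infty S_\infty(t)\,dt$ converge, so $0\in\rho(A_\infty)$ and $\Ran(A_\infty^*)$ is closed. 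For \textup{(1)}$\Rightarrow$\textup{(2)} I would first record that $\Ker(A_\infty^*)=\{0\}$: the map $V\colon i_\infty^* x^*\mapsto\big(s\mapsto i^* S^*(s)x^*\big)$ extends to an isometry of $H_\infty$ into $L^2(0,\infty;H)$ intertwining $S_\infty^*$ with the left-translation semigroup, which has no nonzero fixed vectors. Closed range together with this injectivity makes $-A_\infty^*$ bounded below; being sectorial, its square root is then bounded below as well, and since $\n(-A_\infty^*)^{1/2}h\n^2$ is comparable to $\lb -A_\infty^* h,h\rb$ we recover the dissipativity \textup{(2)}. Condition \textup{(4)} I would attach to this circle through the same comparison of covariances: Datko's theorem recasts it as square-integrability of the orbits $S_H(\cdot)k$, which is tied to \textup{(5)} by the comparison of $Q$ and $Q_\infty$.

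The analytic heart is \textup{(2)}$\Rightarrow$\textup{(7)}. From \textup{(2)} and the chaos estimate \eqref{eq:ues} I obtain $\n P(t)\n_p\le e^{-\omega\theta_p t}$ on $L^p(E,\mu_\infty)\ominus\R\one$, so the part $L_0$ of $L$ there has $0\in\rho(L_0)$ and bounded $(-L_0)^{-1/2}$. For mean-zero $f$ I would estimate by duality, using the factorisation \eqref{eq:L}, $L=D_H^* B D_H$ with $B$ bounded: for mean-zero $g$ with $\n g\n_{p'}\le1$,
\[
\lb(-L_0)^{1/2}f,\,g\rb=\lb(-B)D_H f,\,D_H(-L_0^*)^{-1/2}g\rb\le\n B\n\,\n D_H f\n_p\,\n D_H(-L_0^*)^{-1/2}g\n_{p'}.
\]
The last factor is controlled by the $L^{p'}$-boundedness of the Riesz transform $D_H(-L_0^*)^{-1/2}$ for the adjoint Ornstein--Uhlenbeck semigroup, which is precisely the gradient estimate supplied by \cite{MaaNee09,MaaNee11}. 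Taking the supremum over $g$ yields $\n(-L_0)^{1/2}f\n_p\lesssim\n D_H f\n_p$, whence $\n f\n_p\le\n(-L_0)^{-1/2}\n\,\n(-L_0)^{1/2}f\n_p\lesssim\n D_H f\n_p$, which is \textup{(7)}. Then \textup{(7)}$\Rightarrow$\textup{(6)} is trivial, and \textup{(6)}$\Rightarrow$\textup{(2)} repeats the second half of the proof of Proposition \ref{prop:poincare}: on first-chaos functionals every $L^p$-norm is a fixed multiple of the $L^2$-norm and $D_H x^*=i^* x^*$ is constant, so the $L^p$-inequality collapses to the dissipativity of $A_\infty^*+\omega$ via \eqref{eq:L-form}.

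The step I expect to be the real obstacle is the duality estimate above. One must justify the factorisation and the integration by parts $\lb D_H^* u,v\rb=\lb u,D_H v\rb$ on a common core, check that $(-L_0^*)^{-1/2}g$ lies in $\Dom(D_H)$, and---most importantly---invoke \cite{MaaNee09,MaaNee11} in exactly the form needed here: for the \emph{adjoint} semigroup, in $L^{p'}$, and with the spectral gap built in so that $(-L_0^*)^{-1/2}$ takes the place of the customary $(I-L^*)^{-1/2}$. A secondary delicate point sits in \textup{(1)}$\Rightarrow$\textup{(2)}: the passage from $-A_\infty^*$ bounded below to its square root bounded below, and the comparability of $\n(-A_\infty^*)^{1/2}h\n^2$ with $\lb -A_\infty^* h,h\rb$, rely on the sectorial functional calculus on Hilbert space and should be quoted carefully.
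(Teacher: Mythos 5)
Your argument has one genuine gap, and it sits exactly where you predicted: the step from ``$-A_\infty\s$ bounded below'' to the dissipativity estimate of (2). The claimed comparability of $\n(-A_\infty\s)^{1/2}h\n^2$ with $\lb -A_\infty\s h,h\rb$ is \emph{not} a consequence of the sectorial functional calculus on Hilbert space; it is precisely the abstract Kato square--root property, which is false for general sectorial (even m-accretive, even invertible) operators, and in fact fails already in dimension two. Take $T=\begin{pmatrix}1&2\\0&1\end{pmatrix}$: its numerical range is the disc of radius $1$ centred at $1$, so $T$ is m-accretive; it is invertible and sectorial of angle $<\frac12\pi$; its square root $T^{1/2}=\begin{pmatrix}1&1\\0&1\end{pmatrix}$ is bounded below; yet $\Re\lb Tx,x\rb=0$ for $x=(1,-1)$, so no inequality $\Re\lb Tx,x\rb\ge\omega\n x\n^2$ holds. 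The same example shows that your overall strategy---deducing (2) from (1) or (3) by pure operator theory---cannot work: $e^{-tT}$ is an analytic \emph{contraction} semigroup which is uniformly exponentially stable, but a computation gives $\n e^{-tT}\n = 1-t^3/6+O(t^4)$, so $\n e^{-tT}\n\le e^{-\omega t}$ fails near $t=0$ for every $\omega>0$. The upgrade from (3) (constant $M\ge1$) to (2) (constant $M=1$) genuinely requires the Ornstein--Uhlenbeck structure, namely the form identity $\Re\lb -A_\infty\s h,h\rb=\frac12\n Vh\n^2$ coming from $A_\infty\s=V\s BV$ with $B+B\s=-I$; note that the matrix $T$ above admits no such factorisation, since $T+T\s$ has rank one.

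The paper avoids this trap by never proving (1)$\Rightarrow$(2) directly. It proves (1)$\Rightarrow$(3): density of $\Ran(A_\infty\s)$ is extracted from the factorisation $A_\infty\s=V\s BV$ and $B+B\s=-I$ (a step you omit; alternatively, on a Hilbert space an injective sectorial operator automatically has dense range, so your bounded-below conclusion already yields invertibility), and then invertibility plus the spectral mapping theorem for analytic semigroups gives uniform exponential stability with some $M\ge 1$. Condition (2) is recovered only at the end of the cycle, from the Poincar\'e inequality (7) via Proposition \ref{prop:poincare}. Your proof repairs itself the same way: your ``analytic heart'' runs verbatim from (3) instead of (2), since the chaos estimate \eqref{eq:ues} still gives exponential decay $\n P(t)\n_p\le M^{\theta_p}e^{-\omega\theta_p t}$ on $L^p(E,\mu_\infty)\ominus\R\one$, and you already invoke Proposition \ref{prop:poincare} for (6)$\Rightarrow$(2); so simply delete the square-root step.

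The remaining components are sound and partly diverge from the paper in interesting ways. Your (2)$\Leftrightarrow$(5) via $Q\ge 2\omega Q_\infty$ and covariance domination is a clean substitute for the paper's citation of \cite{GolNee} for (3)$\Leftrightarrow$(4)$\Leftrightarrow$(5), and your kernel argument through the translation isometry into $L^2(0,\infty;H)$ replaces the citation of strong stability of $S_\infty\s$. Your duality proof of the Poincar\'e inequality via the Riesz transform $D_H(-L_0\s)^{-1/2}$ is viable but heavier than the paper's argument, which follows Ledoux and integrates the identity $\lb f,g-P(t)g\rb=-\int_0^t\lb D_Hf,BD_HP(s)g\rb\,ds$, needing only the local gradient estimate $\n D_HP(s)g\n_q\lesssim s^{-1/2}\n g\n_q$ together with \eqref{eq:ues}; your route additionally requires homogeneous Riesz bounds for the \emph{adjoint} semigroup in $L^{p'}$, which must be manufactured from the inhomogeneous bounds of \cite{MaaNee09,MaaNee11} plus the spectral gap. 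Finally, your treatment of (4) (Datko plus ``comparison of covariances'') is only a gesture, not a proof; either supply the argument or cite \cite{GolNee} as the paper does.
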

In what follows we will say that {\em the $L^p$-Poincar\'e inequality holds} 
if condition (7) is satisfied.

Before we start with the proof we recall some further useful facts.
Firstly, on the first Wiener chaos, \eqref{eq:L} reduces to the identity
$$ A_\infty\s  = V\s BV,$$
where $V$ is the closure of the mapping $i_\infty\s x\s\mapsto i\s x\s$; see \cite{GGN, MaaNee09, MaaNee11}.
Secondly, in \cite{MaaNee11} it is shown that Assumption \ref{ass:L-analytic} implies that 
$S$ maps $H$ into itself and that its restriction to $H$ extends to a bounded analytic $C_0$-semigroup on $H$. 
We shall denote this semigroup by $S_H$ and its generator by $A_H$.

\begin{proof}[Proof of Theorem \ref{thm:poincare-main}]
(1)$\Rightarrow$(3): Let us first observe that the strong stability of $S_\infty\s$ \cite[Proposition 2.4]{GolNee} 
implies that $\Ker(A_\infty\s) = \{0\}$.

Suppose next that some
$h\in H_\infty$ annihilates the range of $A_\infty\s$. 
As $\lb A_\infty\s g,h\rb = \lb V\s B V g, h\rb = 0$ for all $g\in\Dom(A_\infty\s)$, it follows that $h\in \Dom(V)$ and 
$\lb BV\s g, Vh\rb = 0$ for all $g\in \Dom(A_\infty\s)$. Using that $\Dom(A_\infty\s) $ is a core for 
$\Dom(V)$ (see \cite{MaaNee09}), it follows that $\lb BV\s g, Vh\rb = 0$ for all $g\in \Dom(V)$.
In particular, $\lb BV\s h, Vh\rb = 0$. Since also $\lb BV\s h, Vh\rb =-\frac12\n V h\n^2$ by the identity 
$B+B\s = -I$, it follows that $Vh=0$ and therefore $h\in \Ker (A_\infty\s) = \Ker (V)$.
But we have already seen that $\Ker(A_\infty\s) = \{0\}$ and we conclude that $h=0$. 

This argument proves that $\Ran(A_\infty\s)$ is dense. 
%On the other hand, 
%from the identity  $$V\s Bi\s x\s = i_\infty\s A\s x\s, \quad x\s\in \Dom(A\s),$$ 
%(see the proof of \cite[Theorem 3.3]{MaaNee11}) 
%we infer that $V\s$ has dense range. 
Since by assumption $A_\infty\s$
has closed range,
it follows that $A_\infty\s$ is surjective. As we observed at the beginning of the proof, $A_\infty\s$ is also
injective, and therefore $A_\infty\s$ is boundedly invertible by the closed graph theorem. Since $A_\infty\s$ generates an analytic $C_0$-contraction semigroup, 
 the spectral mapping theorem for analytic $C_0$-semigroups (see \cite{EngNag}) implies that $S_\infty\s$ is uniformly exponentially 
 stable.

(3)$\Rightarrow$(7): 
Fix an arbitrary $1<p<\infty$.
Fix a function $f\in \F^0$ and let $\frac1p+\frac1q$.
Then 
$$ \n f-\ov f\n 
= \sup_{\substack{\n g\n_q\le 1\\ \ov g = 0}} 
 |\lb f - \overline f,g\rb|
= \sup_{\substack{\n g\n_q\le 1\\ \ov g = 0}}   |\lb f - \overline f,g-\ov g\rb|= \sup_{\substack{\n g\n_q\le 1\\ \ov g = 0}}   |\lb f ,g-\ov g\rb| ,
$$
where it suffices to consider functions $g\in \F^0$. Next we observe that, by \eqref{eq:ues}, 
$$ \lb f,g-\overline g\rb
= \lim_{t\to\infty} \lb f, g- P(t)g\rb.$$
Following an argument in \cite[Lemma 3]{Led} we have
\begin{align*}  \lb f, g- P(t)g\rb & = -\int_0^t \lb f, LP(s) g\rb\,ds
= -\int_0^t \lb D_H f, BD_H P(s) g\rb \,ds.
\end{align*}
If in addition $\ov g=0$ (i.e. if $g\in L^p(E,\mu_\infty)\ominus\R\one$), then for all $t\ge 1$ we have
\begin{align*} |\lb f, g-P(t)g\rb| 
& \le \n B\n \n D_H f\n_p \Big(\int_0^1 + \int_1^\infty \Big)\n D_H P(s) g\n_q \,ds 
\\ & \lesssim  \n D_H f\n_p\Big( \int_0^1 \frac1{\sqrt s}\n g\n_q \,ds + \n D_H P(1)\n \int_0^\infty e^{-\omega\theta_q}\n g\n_q\,ds\Big).
\end{align*}
where we used the gradient estimates of \cite{MaaNee09} and \eqref{eq:ues}.
Taking the supremum over all $g\in \F^0$  
of $L^q$-norm $1$ with $\ov g = 0$,  this gives 
$$ \n f - \overline f\n_p \lesssim \n D_H f\n_p.$$
Since $\F^0$ is a core for $\Dom(D_H)$ 
this concludes the proof of the implication. 

(7)$\Rightarrow$(6): This implication is trivial. 

(6)$\Rightarrow$(3): This follows from Proposition \ref{prop:poincare}
along with the fact that $H_\infty$ is isomorphic to the first Wiener-It\^o chaos in $L^p(E,\mu_\infty)$.

(3)$\Rightarrow$(1): The uniform exponential stability of $S_\infty\s$ implies that 
$A_\infty\s$ is boundedly invertible.

(3)$\Leftrightarrow$(4)$\Leftrightarrow$(5): These equivalences have been proved in \cite[Theorem 5.4]{GolNee}.

(7)$\Rightarrow$(2): This follows from Proposition \ref{prop:poincare}.

(2)$\Rightarrow$(3): Trivial.
\end{proof}
The equivalent conditions of the theorem do not in general imply the existence 
of an $\omega>0$ such that $\n S_H(t)\n \le e^{-\omega t}$ for all $t\ge 0$:

\begin{example}
 Consider the Dirichlet Laplacian $\Delta$ on $E = L^2(-1,1)$ and take $H=E$. Let $S$ denote the 
 heat semigroup generated by $\Delta$ on this space. Fix $\omega>0$. 
 As is well known and easy to check,
 Assumptions \ref{ass:mu-infty} and  \ref{ass:L-analytic} are satisfied for the operator $\Delta-\omega$.
 Let us now replace the norm of $L^2(-1,1)$ by the equivalent (Hilbertian) norm
 $$ \n f\n_{(r)}^2 := \n f|_{(-1,0)}\n^2 + r^2 \n f|_{(0,1)}\n^2,$$
 where $r>0$ is a positive scalar. Starting from an initial condition with support in $(-1,0)$, the 
 semigroup $s_\omega(t) = e^{-\omega t}S(t)$ generated by $\Delta-\omega$ will instantaneously spread out the support of $f$ over the
 entire interval $(-1,1)$. Hence if we fix $t_0>0$ and $\omega>0$ we may choose $r_0>0$ so large that 
 $$ \n S_\omega(t_0)f\n_{(r)} > \n f\n_{(r)}.$$
 As a result, the semigroup  $S_\omega$ is uniformly exponentially stable but not contractive
 on $L^2(-1,1)$ endowed with the norm $ \n \cdot\n_{(r_0)}$.
 \end{example}
One could object to this example that there is an equivalent Hilbertian norm (namely, the original norm of $L^2(-1,1)$)
on which we do have $\n S_\omega(t)\n \le e^{-\omega t}$. There exist examples, however, of bounded analytic 
Hilbert space semigroups which are not similar to an analytic contraction semigroup. Such examples may be realised
as multiplication semigroups on a suitable (pathological) Schauder basis (see, e.g., \cite{LeM} and the references given there). 
For such examples,
Assumptions \ref{ass:mu-infty} and  \ref{ass:L-analytic} are again satisfied and we obtain a counterexample
that cannot be repaired by a Hilbertian renorming.

As an application of Theorem \ref{thm:poincare-main} we have the following sufficient 
condition for the validity of the $L^p$-Poincar\'e inequality.

\begin{theorem}[Compactness implies the $L^p$-Poincar\'e inequality]\label{thm:poincare-compact} Let Assumptions \ref{ass:mu-infty} and  \ref{ass:L-analytic} hold
and fix $1<p<\infty$. 
The following assertions are equivalent:
\begin{enumerate}[\rm(1)]
\item $L$ has compact resolvent on $L^p(E,\mu_\infty)$;
\item $P$ is compact on $L^p(E,\mu_\infty)$;
\item $A_\infty$ has compact resolvent on $H_\infty$;
\item $S_\infty$ is compact on $H_\infty$;
\item $A_H$ has compact resolvent on $H$; % and $H_\infty\embed H$ continuously;
\item $S_H$ is compact on $H$. % and $H_\infty\embed H$ continuously.
\end{enumerate}
If these equivalent conditions are satisfied, then the $L^p$-Poincar\'e inequality holds
 for all $1<p<\infty$.
\end{theorem}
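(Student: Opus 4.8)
The plan is to group the six conditions into three ``local'' equivalences, one inside each of the spaces $L^p(E,\mu_\infty)$, $H_\infty$ and $H$, and then to bridge the three pictures using the second quantisation $P = \bigoplus_{n\ge 0} S_\infty^{*\odot n}$ and the factorisation $A_\infty\s = V\s B V$ on the first chaos. For the local equivalences I would invoke the standard fact (see \cite{EngNag}) that a bounded analytic $C_0$-semigroup is immediately compact if and only if its generator has compact resolvent. By Assumption \ref{ass:L-analytic} the semigroup $P$ is analytic, which gives (1)$\Leftrightarrow$(2); its restriction to the first Wiener chaos is $S_\infty\s$, which is therefore analytic, and hence so is $S_\infty$, so that (3)$\Leftrightarrow$(4); and $S_H$ is a bounded analytic semigroup by the facts recalled before the proof (from \cite{MaaNee11}), giving (5)$\Leftrightarrow$(6).

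To bridge the pictures I would first treat (2)$\Leftrightarrow$(4). The first chaos $H_\infty$ is a closed $P$-invariant subspace of $L^p(E,\mu_\infty)$ on which $P$ acts as $S_\infty\s$, and a restriction of a compact operator to a closed invariant subspace is compact; this yields (2)$\Rightarrow$(4). For the converse, (4) first forces uniform exponential stability of $S_\infty$ (see the next paragraph), so $\n S_\infty(t)\n<1$ for $t$ large; combining the chaos-wise bound $\n P(t)\n_p\le \n S_\infty(t)\n^{n\theta_p}$ underlying \eqref{eq:ues} with the compactness of each symmetric tensor power $S_\infty^{*\odot n}(t)$ and the equivalence of the $L^p$- and $L^2$-norms on a fixed chaos, one sees that $P(t)$ is a norm limit of finite direct sums of compact operators, hence compact for large $t$; analyticity then upgrades this to immediate compactness, giving (2). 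To link the $H_\infty$- and $H$-pictures, that is (4)$\Leftrightarrow$(6), I would transport compactness along the closed, densely defined operator $V$ with dense range intertwining $S_\infty$ and $S_H$, exactly as in the proof of the stability equivalences (3)$\Leftrightarrow$(4)$\Leftrightarrow$(5) of Theorem \ref{thm:poincare-main} and \cite[Theorem 5.4]{GolNee}. As a byproduct, the $p$-dependent conditions (1) and (2) turn out to be independent of $p$.

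For the final assertion I would show that the equivalent conditions force uniform exponential stability of $S_\infty\s$, after which Theorem \ref{thm:poincare-main} (implication (3)$\Rightarrow$(7)) delivers the $L^p$-Poincar\'e inequality for every $1<p<\infty$. Since $A_\infty\s$ has compact resolvent, its spectrum consists of isolated eigenvalues of finite multiplicity with no finite accumulation point; since $S_\infty\s$ is an analytic contraction semigroup, this spectrum lies in a sector with vertex at the origin inside the closed left half-plane, so that $\Re\lambda\to-\infty$ as $|\lambda|\to\infty$ along $\sigma(A_\infty\s)$; and since $S_\infty\s$ is strongly stable \cite[Proposition 2.4]{GolNee}, no eigenvalue lies on the imaginary axis, for an eigenvalue $i\beta$ would produce the non-decaying orbit $S_\infty\s(t)h=e^{i\beta t}h$. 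These three facts confine $\sigma(A_\infty\s)$ to a half-plane $\{\Re\lambda\le-\omega\}$ for some $\omega>0$, whence $s(A_\infty\s)<0$; as the growth bound equals the spectral bound for analytic semigroups, this gives $\n S_\infty(t)\n\le Me^{-\omega t}$.

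I expect the crux to be the implication (4)$\Rightarrow$(2): an infinite direct sum of compact operators need not be compact, so the compactness of $P$ cannot be read off chaos by chaos but must be extracted from the decay of $\n S_\infty(t)\n^{n\theta_p}$ in $n$, which is available only after the spectral-gap argument has secured uniform exponential stability. The transfer of compactness between $H_\infty$ and $H$ along $V$ is the second delicate point, since $V$ is in general unbounded with unbounded inverse, so one cannot simply conjugate the two semigroups.
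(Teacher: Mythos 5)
Most of your proposal tracks the paper's proof: the three local equivalences via analyticity and \cite[Theorem 4.29]{EngNag}, (2)$\Rightarrow$(4) by restriction to the first chaos, (4)$\Rightarrow$(2) by second quantisation after first securing uniform exponential stability, and the final assertion by feeding exponential stability of $S_\infty\s$ into Theorem \ref{thm:poincare-main}. Your spectral-gap argument is packaged differently (discreteness of $\sigma(A_\infty\s)$ plus sectoriality and absence of imaginary eigenvalues, rather than the paper's route via $1\notin\sigma(S_\infty\s(t))$ and the spectral mapping theorem for eventually norm continuous semigroups), but both are sound and rest on the same ingredients: compactness, strong stability of $S_\infty\s$, and equality of spectral and growth bounds.

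The genuine gap is the bridge (4)$\Leftrightarrow$(6), and you have in effect diagnosed it yourself without repairing it: transporting compactness ``along $V$'' cannot work, since $V$ is unbounded with unbounded inverse, and the equivalences (3)$\Leftrightarrow$(4)$\Leftrightarrow$(5) of Theorem \ref{thm:poincare-main} are \emph{cited} from \cite[Theorem 5.4]{GolNee} rather than proved in the paper, so there is no argument there to imitate; moreover those are norm estimates, which transfer along continuous embeddings, not compactness statements. The paper's mechanism is different and is the missing idea: bounded factorisations between the two spaces. Since $S(t)$ maps $H$ boundedly into $H_\infty$ (call this operator $S_{H,\infty}(t)$; this uses \cite[Theorem 3.5]{GolNee} and \cite[Proposition 1.3]{Nee98}), and since --- once uniform exponential stability is known --- $H_\infty$ embeds continuously into $H$ via $i_{\infty,H}$, one has
$$S_H(t) = i_{\infty,H}\circ S_\infty(t/2)\circ S_{H,\infty}(t/2), \qquad S_\infty(t) = S_{H,\infty}(t/2)\circ S_H(t/2)\circ i_{\infty,H},$$
and compactness of the middle factor passes to the composition, giving both implications. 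In addition, for (6)$\Rightarrow$(4) you must manufacture the embedding $i_{\infty,H}$ from hypothesis (6) alone, which needs a step absent from your proposal: if $S_H(t)h=h$ for all $t\ge0$, then $h\in H_\infty$ (because $S(t)H\subseteq H_\infty$) and $S_\infty(t)h=h$, whence $h=0$ by strong stability of $S_\infty\s$; thus $1$ is not an eigenvalue of the compact operator $S_H(t)$, your spectral argument then yields uniform exponential stability of $S_H$, and Theorem \ref{thm:poincare-main}, implication (4)$\Rightarrow$(5), produces $H_\infty\embed H$. Without the fixed-point step and the factorisations, conditions (5) and (6) remain disconnected from the rest.
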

\begin{proof} 
%We begin by noting that t
The equivalences
(1)$\Leftrightarrow$(2),  
(3)$\Leftrightarrow$(4), and (5)$\Leftrightarrow$(6) follow from 
\cite[Theorem 4.29]{EngNag} since $P$, $S_\infty$, and $S_H$ are analytic semigroups.

We will prove next that (4) implies 
the validity of the $L^p$-Poincar\'e inequality. 
% Assuming that $A_\infty$ has compact resolvent,
% from $-R(1,A_\infty\s)A_\infty\s = - A_\infty\s R(1,A_\infty\s) = I - R(1,A_\infty\s)$ we infer that $A_\infty\s$ 
%  is boundedly invertible modulo a compact 
%  operator. Therefore $A_\infty\s$ is Fredholm by the extension of Atkinson's theorem 
% to densely defined closed operators \cite[Lemma 2.4]{Sch}. In particular $A_\infty\s$ has closed range
% and we may apply Theorem \ref{thm:poincare-main} to obtain the conclusion.
%There is an alternative proof, based on 
We will use some elementary facts from semigroup theory 
which can all be found in \cite{EngNag}. 
%We begin by noting that $S_\infty(t)$ is compact for all $t>0$; this follows from (1) combined with the 
%analyticity of $S_\infty\s$.
The strong stability of $S_\infty\s$ implies that $1$ is not an eigenvalue of $S_\infty\s(t)$ for any $t>0$.
Since these operators are compact it follows that $1\not\in \sigma(S_\infty\s(t))$, which in turn 
implies that $0\not\in \sigma(A_\infty\s)$ by the spectral mapping theorem for eventually norm continuous
semigroups. By the equality spectral bound and growth bound for such semigroups, it follows that 
$S_\infty\s$ (and hence also $S_\infty$) is uniformly exponentially stable. We may now 
apply Theorem \ref{thm:poincare-main} to obtain the conclusion.

(2)$\Rightarrow$(4): This follows by restricting to the first Wiener-It\^o chaos.

(4)$\Rightarrow$(2): We have already seen that (4) implies that $S_\infty\s$ is uniformly exponentially stable.
Because of this, the compactness of $S_\infty\s(t)$ implies, by second quantisation, the compactness
 of $P(t)$ on $L^p(E,\mu_\infty)$ (cf. \cite[Lemma 4.2]{Nee05}).

(4)$\Rightarrow$(6): By \cite[Theorem 3.5]{GolNee} combined with \cite[Proposition 1.3]{Nee98},
for each $t>0$ the operator $S(t)$ maps $H$ into $H_\infty$; we shall denote this operator by
$S_{H,\infty}(t)$.
Furthermore we have a continuous embedding
$i_{\infty,H}:H_\infty\embed H$ \cite[Theorem 5.4]{GolNee} (this result can be applied here since, by
what has already been proved, (2) implies the uniform exponential stability of $S_\infty$). Now if 
 $S_\infty$ is compact, the compactness of $S_H$ follows from the factorisation 
 $$S_H(t) = i_{\infty,H}\circ S_\infty(t/2) \circ S_{H,\infty}(t/2).$$ 

(6)$\Rightarrow$(4): We will show that (6) implies that 
$H_\infty$ embeds into $H$. Once we know this, (4) follows from 
the factorisation
$S_\infty(t) = S_{H,\infty}(t/2) \circ S_H(t/2) \circ i_{\infty,H}.$

\smallskip
This concludes the proof of the equivalences of the conditions  (1)--(6). 
To complete the proof we will now show that
these conditions imply the validity of the Poincar\'e inequality.

Suppose that $h\in H$ is a vector satisfying 
$S_H(t)h = h$ for all $t\ge 0$. Since $S(t)$ maps $H$ into $H_\infty$ (see \cite[Proposition 2.3]{GolNee})
this means that $h\in H_\infty$. But then in $E$ for all $t\ge 0$ we have 
$i_\infty S_\infty(t)h = i_H S_H(t)h = i_H h = i_\infty h$, so that in $H_\infty$ we obtain
$S_\infty(t)h=h$ for all $t\ge 0$. Hence, for all $h'\in H_\infty$,
$$ \lb h,h'\rb_{H_\infty} = \lim_{t\to\infty} \lb S_\infty(t)h,h'\rb_{H_\infty}
=  \lim_{t\to\infty} \lb h, S_\infty\s(t)h'\rb_{H_\infty} = 0$$
by the strong stability of $S_\infty\s$. This being true for all $h'\in H_\infty$, it follows that 
$h = 0$. We have thus shown that $1$ is not an eigenvalue of $S_H(t)$. Having arrived at this
conclusion, the argument given above for $S_\infty$ can now be repeated to conclude that 
$S_H$  is uniformly exponentially stable. Now Theorem \ref{thm:poincare-main} 
implies that $H_\infty$ embeds into $H$. 
\end{proof}
\begin{remark} 
The equivalence of (4) and (6) for symmetric Ornstein-Uhlenbeck semigroups follows from \cite[Theorem 2.9]{CGsymm}.
\end{remark}

\begin{corollary}
Let $1<p<\infty$. If the embedding $\Dom(D_H)\embed L^p(E,\mu_\infty)$ is compact, then the $L^p$-Poincar\'e inequality holds.
\end{corollary}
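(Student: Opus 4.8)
The plan is to deduce the result from Theorem \ref{thm:poincare-compact} by verifying its condition (2), namely that $P(t)$ is compact on $L^p(E,\mu_\infty)$ for every $t>0$. The hypothesis that the embedding $\Dom(D_H)\embed L^p(E,\mu_\infty)$ is compact is tailor-made for a factorisation argument: if I can show that for each fixed $t>0$ the operator $P(t)$ maps $L^p(E,\mu_\infty)$ \emph{boundedly} into $\Dom(D_H)$ (equipped with its graph norm), then writing $P(t)$ as the composition of this bounded operator with the compact inclusion $\Dom(D_H)\embed L^p(E,\mu_\infty)$ exhibits $P(t)$ as a compact operator on $L^p(E,\mu_\infty)$, since the composition of a bounded operator with a compact one is compact.

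The boundedness of $P(t)\colon L^p(E,\mu_\infty)\to \Dom(D_H)$ is precisely what the $L^p$-gradient estimates of \cite{MaaNee09} provide. Fix $t>0$. Since $P(t)$ is a contraction on $L^p(E,\mu_\infty)$ we have $\n P(t)g\n_p\le \n g\n_p$, while the gradient estimate $\n D_H P(s)\n_{p\to p}\lesssim s^{-1/2}$ (valid for small $s$, and extended to $s>1$ by factorising $D_H P(s)=D_H P(1)P(s-1)$ and using the contractivity of $P(s-1)$) gives $\n D_H P(t)g\n_p\le C_t\n g\n_p$ for some finite constant $C_t$. Adding these two estimates shows that $g\mapsto P(t)g$ is bounded from $L^p(E,\mu_\infty)$ into $\Dom(D_H)$ with its graph norm; in particular $P(t)g\in\Dom(D_H)$ for every $g\in L^p(E,\mu_\infty)$, so the factorisation described above is legitimate and $P(t)$ is compact.

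Having verified condition (2) of Theorem \ref{thm:poincare-compact} for the given $p$, the equivalences recorded there show that all of the conditions (1)--(6) hold, and the concluding assertion of that theorem then yields the $L^p$-Poincar\'e inequality for all $1<p<\infty$, in particular for the value of $p$ at hand. I expect the only point requiring any care to be the passage from the small-time gradient bound to the boundedness of $D_H P(t)$ for a single fixed $t>0$; once $P(t)\colon L^p(E,\mu_\infty)\to\Dom(D_H)$ is known to be bounded, the compactness of $P(t)$, and hence the entire conclusion, follows immediately from the compact embedding hypothesis with no further estimates needed.
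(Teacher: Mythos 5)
Your proposal is correct, but it follows a genuinely different route from the paper's, which is a one-liner: by \cite[Theorem 8.2]{MaaNee09} the inclusion $\Dom(L)\embed\Dom(D_H)$ is continuous, so the hypothesis forces the embedding $\Dom(L)\embed L^p(E,\mu_\infty)$ to be compact, i.e.\ $L$ has compact resolvent; this is condition (1) of Theorem \ref{thm:poincare-compact}, whose concluding assertion then gives the Poincar\'e inequality. You instead verify the equivalent condition (2), compactness of the operators $P(t)$, by factoring $P(t)$ through the compact embedding using the quantitative gradient estimate $\n D_H P(s)\n_{p\to p}\lesssim s^{-1/2}$ of \cite{MaaNee09} (the same estimate the paper invokes in the proof of Theorem \ref{thm:poincare-main}), together with the extension to $s\ge 1$ via $D_H P(s)=D_H P(1)P(s-1)$ and contractivity. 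Both arguments are instances of the same idea --- exhibit a smoothing operator that lands boundedly in $\Dom(D_H)$ with its graph norm and compose with the compact embedding --- but they rest on different inputs: the paper needs only the qualitative domain inclusion $\Dom(L)\subseteq\Dom(D_H)$ and no singular-in-time estimate, which makes it shorter; your version is self-contained modulo the gradient estimate and directly produces compactness of the semigroup rather than of the resolvent (the two being equivalent by analyticity and \cite[Theorem 4.29]{EngNag}, as recorded in the theorem). One small point you should make explicit: the gradient bound is established on a dense class of functions, so to conclude that $P(t)g\in\Dom(D_H)$ for \emph{every} $g\in L^p(E,\mu_\infty)$ you should either appeal to the closedness of $D_H$ (approximate $g$ by cylindrical functions and pass to the limit) or simply note that analyticity gives $P(t)g\in\Dom(L)\subseteq\Dom(D_H)$ --- the latter inclusion being exactly the fact the paper uses.
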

Recall our abuse of notation to denote by $\Dom(D_H)$ and $\Dom(L)$ the domains of closed operators $D_H$ 
and $L$ in $L^p(E,\mu_\infty)$. Necessary and sufficient conditions for the compactness of 
the embedding $\Dom(D_H)\embed L^p(E,\mu_\infty)$ are stated in \cite{GGN}.
\begin{proof} Since $\Dom(L)$ embeds into $\Dom(D_H)$ (see \cite[Theorem 8.2]{MaaNee09}) 
 this is immediate from the previous theorem.
\end{proof}

Our next aim is to show that also an $L^p$-inequality holds for the adjoint operator $D_H\s$. Here we view
$D_H$ as a closed densely defined operator from $L^q(E,\mu_\infty)$ into $ \overline{\Ran(D_H)}$
and $D_H\s$ a closed densely defined operator from $\overline{\Ran(D_H)}$ into $L^p(E,\mu_\infty)$, $\frac1p+\frac1q=1$.
The proof relies on some facts that have been proved in \cite{MaaNee09, MaaNee11}. We start by observing that 
if Assumptions \ref{ass:mu-infty} and  \ref{ass:L-analytic} hold, then the semigroup
$$\underline P(t):= P(t)\otimes S_H\s(t)$$
extends to a bounded analytic $C_0$-semigroup on $L^p(E,\mu_\infty;H)$, $1<p<\infty$.
We will need the fact that on $\overline{\Ran(D_H)}$ 
the generator $\underline L$ of this semigroup is given by 
$$\underline L = D_H D_H\s B;$$ the proof as well as the rigorous interpretation of the right-hand side is given in the
references just quoted.

\begin{theorem}[$L^p$-Poincar\'e inequality for $D_H\s$]\label{thm:poncare-DHs}  
Let Assumptions \ref{ass:mu-infty} and  \ref{ass:L-analytic} hold. 
 If the equivalent conditions of Theorem \ref{thm:poincare-main} are satisfied, 
then there exists a finite constant $C\ge 0$ such that for all $1<p<\infty$ we have 
$$ \n f \n_{L^p(E,\mu_\infty;H)} \le C_p\n D_H\s f\n_{L^p(E,\mu_\infty;H)},\quad  f\in \Dom(D_H\s),$$
where $D_H\s$ is interpreted as explained above.
\end{theorem}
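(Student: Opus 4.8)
The plan is to mirror the proof of the implication (3)$\Rightarrow$(7) in Theorem \ref{thm:poincare-main}, with $P$, $L$ and the gradient bound for $D_H$ replaced by the lifted objects $\underline P$, $\underline L$ and a gradient bound for $D_H\s$. Fix $1<p<\infty$ with conjugate exponent $q$. Since the equivalent conditions of Theorem \ref{thm:poincare-main} are in force, condition (4) provides $M\ge1$ and $\omega>0$ with $\n S_H(t)\n\le Me^{-\omega t}$, and hence $\n S_H\s(t)\n\le Me^{-\omega t}$. As $P(t)$ is a contraction on $L^p(E,\mu_\infty;H)$ (by Jensen's inequality applied fibrewise) and $\underline P(t)=P(t)\ot S_H\s(t)$, one gets
$$\n\underline P(t)\n_{\calL(L^p(E,\mu_\infty;H))}\le \n S_H\s(t)\n\le Me^{-\omega t},$$
so that $\underline P$ is uniformly exponentially stable on $L^p(E,\mu_\infty;H)$ for every $1<p<\infty$. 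In particular $-\underline L$ is invertible and sectorial, and the fractional powers $(-\underline L)^{\pm1/2}$ are well defined.

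The first step is to set up the duality. Let $f\in\Dom(D_H\s)\subseteq\ov{\Ran(D_H)}\subseteq L^p(E,\mu_\infty;H)$. Since $\underline P$ leaves $\ov{\Ran(D_H)}$ invariant (by the intertwining $\underline P(s)D_H=D_H P(s)$ established in \cite{MaaNee09,MaaNee11}), it suffices to test against $g\in\ov{\Ran(D_H)}\subseteq L^q(E,\mu_\infty;H)$; reducing the supremum over all unit test functions to such $g$ costs only the norm of the bounded projection onto $\ov{\Ran(D_H)}$ furnished by the functional calculus of those references. For such $g$ we have $\underline P(s)g\to0$ by exponential stability, whence
$$ \lb f,g\rb=-\int_0^\infty \lb f,\underline L\,\underline P(s)g\rb\,ds.$$
On $\ov{\Ran(D_H)}$ the generator equals $\underline L=D_H D_H\s B$, and transferring the outer $D_H$ onto $f$ (this is exactly the adjoint relation $\lb f,D_H u\rb=\lb D_H\s f,u\rb$, legitimate because $\underline P(s)g\in\Dom(\underline L)$ forces $D_H\s B\,\underline P(s)g\in\Dom(D_H)$) yields the key identity
$$ \lb f,g\rb=-\int_0^\infty \lb D_H\s f,\; D_H\s B\,\underline P(s)g\rb\,ds,$$
the pairing on the right being the scalar $L^p(E,\mu_\infty)$--$L^q(E,\mu_\infty)$ duality.

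Next I would estimate the integral as in (3)$\Rightarrow$(7), factoring out $\n D_H\s f\n_p$ and splitting $\int_0^\infty=\int_0^1+\int_1^\infty$:
$$ |\lb f,g\rb|\le \n D_H\s f\n_p\Big(\int_0^1+\int_1^\infty\Big)\n D_H\s B\,\underline P(s)g\n_q\,ds.$$
For the tail $s\ge1$ I would write $D_H\s B\,\underline P(s)=\big(D_H\s B\,\underline P(1)\big)\underline P(s-1)$, the first factor being bounded and the second decaying like $Me^{-\omega(s-1)}$, so the tail contributes $\lesssim\n g\n_q$. For $0<s\le1$ I would use the factorisation $D_H\s B\,\underline P(s)=\big[D_H\s B(-\underline L)^{-1/2}\big]\big[(-\underline L)^{1/2}\underline P(s)\big]$ together with the standard analytic bound $\n(-\underline L)^{1/2}\underline P(s)\n\lesssim s^{-1/2}$, giving $\n D_H\s B\,\underline P(s)\n\lesssim s^{-1/2}$ and hence an integrable singularity on $(0,1]$. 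Taking the supremum over $g\in\ov{\Ran(D_H)}$ of $L^q$-norm one then produces $\n f\n_p\lesssim \n D_H\s f\n_p$ for all $f\in\Dom(D_H\s)$, and the argument applies verbatim for each $1<p<\infty$.

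The main obstacle is the short-time gradient estimate $\n D_H\s B\,\underline P(s)\n\lesssim s^{-1/2}$, equivalently the $L^q(E,\mu_\infty;H)$-boundedness of the adjoint Riesz transform $D_H\s B(-\underline L)^{-1/2}$. This is the exact companion, for the lifted semigroup $\underline P$ with generator $\underline L=D_H D_H\s B$, of the gradient estimate $\n D_H P(s)\n\lesssim s^{-1/2}$ used in Theorem \ref{thm:poincare-main}, and I would extract it from the joint functional calculus and square-function estimates of \cite{MaaNee09,MaaNee11}; the uniform exponential stability of $\underline P$ derived above is precisely what makes $(-\underline L)^{-1/2}$ bounded, so that the factorisation is justified. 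A secondary technical point is the passage to test functions $g\in\ov{\Ran(D_H)}$, which rests on the $L^p$-boundedness of the Hodge-type projection onto $\ov{\Ran(D_H)}$ available from the same references.
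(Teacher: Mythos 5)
Your proposal is correct and takes essentially the same route as the paper's own proof: both argue by duality against test functions in $\overline{\Ran(D_H)}$, use the representation $\underline L = D_H D_H\s B$ to obtain the integral identity $\lb f, g-\underline P(t)g\rb = -\int_0^t \lb D_H\s f, D_H\s B\,\underline P(s)g\rb\,ds$, and split the integral into a short-time part controlled by the gradient estimate $\n D_H\s B\,\underline P(s)\n \lesssim s^{-1/2}$ (which the paper, like you, defers to \cite{MaaNee09}) and a tail controlled by the uniform exponential stability of $\underline P = P\otimes S_H\s$. Your write-up merely makes explicit some points the paper leaves implicit (the derivation of the stability of $\underline P$, and the bounded projection onto $\overline{\Ran(D_H)}$ needed to restrict the dual supremum), so it matches the paper's argument in substance.
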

\begin{proof}
We can follow the proof of Theorem \ref{thm:poincare-main},
this time using that for bounded cylindrical functions $f,g\in \overline{\Ran(D_H)}$ we have
\begin{align*}  \lb f, g- \underline P(t)g\rb & = -\int_0^t \lb f, \underline L\underline P(s) g\rb\,ds
= -\int_0^t \lb D_H\s f, D_H\s B \underline P(s) g\rb \,ds.
\end{align*}
For $t\ge 1$ we then have
\begin{align*} |\lb f, g- \underline P(t)g\rb| 
& \le \n B\n \n D_H\s f\n_p \Big(\int_0^1 + \int_1^\infty \Big)\n D_H\s B \underline P(s) g\n_q \,ds 
\\ & \lesssim  \n D_H\s f\n_p\Big( \int_0^1 \frac1{\sqrt s}\n g\n_q \,ds + \n D_H\s B \underline P(1)\n \int_0^\infty e^{-\omega\theta_q}\n g\n_q\,ds\Big),
\end{align*}
this time using the gradient estimates for $D_H\s B$ (cf. the proof of \cite[Proposition 9.3]{MaaNee09}
where resolvents are used instead of the semigroup operators)
and the uniform exponential stability of $\underline P = P\otimes S_H\s$.
The proof can be finished along the lines of Theorem \ref{thm:poincare-main};
this time we use that $\lim_{t\to\infty}\lb f, g- \underline P(t)g\rb = \lb f, g\rb$.
\end{proof}

\section{Examples}\label{sec:ex}

\begin{example}[Finite dimensions and non-degenerate noise]
Suppose that $H = E = \R^d$ and let Assumption \ref{ass:mu-infty} hold. 
Then $H^\infty = \R^d$. Under these assumptions, a result of Fuhrman \cite[Theorem 3.6 and Corollary 3.8]{Fuh} implies that  
Assumption \ref{ass:L-analytic} holds. By finite-dimensionality, 
the conditions of 
Theorems \ref{thm:poincare-main} and \ref{thm:poncare-DHs}  are satisfied. It follows that
the $L^p$-Poincar\'e inequalities for $D_H$ and $D_H\s$ hold for $1<p<\infty$. 
\end{example}

% NOT CLEAR: is \Ran(A_\infty\s) closed in this example?
%
 \begin{example}[The self-adjoint case] Suppose that $H=E$ and $S$ is self-adjoint on $E$. 
 Then Assumption \ref{ass:mu-infty} holds if and only if $S$ is uniformly exponentially stable.
 In this situation, by \cite{GolNee} also $S_\infty$ is self-adjoint and uniformly
 exponentially stable, and $P$ is self-adjoint on $L^2(E,\mu_\infty)$. In particular, 
 Assumption \ref{ass:L-analytic} then holds and therefore the equivalent conditions of 
 Theorem \ref{thm:poincare-main} are satisfied. It follows that
 the $L^p$-Poincar\'e inequality holds for $1<p<\infty$.  
 \end{example}

\begin{example}[The strong Feller case] Suppose that Assumptions \ref{ass:mu-infty} and \ref{ass:L-analytic} hold,
and that $P$ is strongly Feller. As is well known, this is equivalent to the condition that for each $t>0$
the semigroup operator
$S(t)$ maps $E$ into the reproducing kernel Hilbert space $H_t$ associated with $\mu_t$,
 the centred Gaussian Radon measure on $E$ whose covariance operator $Q_t\in\calL(E\s,E)$ is given by 
$$\lb Q_t x\s, y\s\rb = \int_0^t \lb Q S\s(s)x\s, S\s(s) y\s\rb \,ds, \quad x\s,y\s\in E\s.$$
These measures exist by a standard covariance domination argument (note that $\lb Q_t x\s,x\s\rb\le \lb Q_\infty x\s,x\s\rb)$.
By \cite{Nee98} we have a contractive embedding $i_{i,\infty}:H_t\embed H_\infty$. 
Then $S_\infty(t) = i_{t,\infty}\circ S(t)\circ i_\infty$, where 
 $i_\infty:H_\infty\embed E$ is the inclusion mapping. The compactness of  $i_\infty:H_\infty\embed E$
(this mapping being $\gamma$-radonifying; see \cite{Nee-Can})
implies that $S_\infty(t)$ is compact for all $t>0$, and by a general result from semigroup theory
 this implies that the resolvent operators $R(\lambda,A_\infty)$ are compact. 
Similarly from
 $S_H(t) = i_{t,\infty}i_{\infty,H}\circ S(t)\circ i_\infty$, where  $i_{\infty,H}: H_\infty\embed H$ is the embedding mapping
(see \cite[Theorem 5.4]{GolNee} for the proof that this inclusion holds under the present assumptions)
it follows that $S_H(t)$ is compact and therefore $R(\lambda,A_H)$ are compact.
It follows that
the $L^p$-Poincar\'e inequalities for $D_H$ and $D_H\s$ hold for $1<p<\infty$. 
\end{example}

\begin{example}[The case $\Dom(A)\embed H_\infty$]
Suppose that Assumptions \ref{ass:mu-infty} and \ref{ass:L-analytic} hold,
and that we have a continuous inclusion $\Dom(A)\embed H_\infty$. Then
$R(\lambda,A_\infty) =  i_A R(\lambda,A) i_\infty$, where $i_\infty:H_\infty\embed E$ and 
$i_A: \Dom(A)\embed H_\infty$ are the inclusion mappings. The compactness of  $i_\infty:H_\infty\embed E$
implies that $R(\lambda,A_\infty)$ is compact. It follows that
the $L^p$-Poincar\'e inequality for $D_H$ holds for $1<p<\infty$. A similar argument 
(using again that $H_\infty\embed H$) shows that if 
the inclusion $ H\embed E$ is compact, then  $R(\lambda,A_H)$ is compact as well and
the $L^p$-Poincar\'e inequalities for $D_H$ and $D_H\s$ hold for $1<p<\infty$. 

In fact the same results hold if  $\Dom(A^n)\embed H_\infty$ for some large enough $n\ge 1$. We give the 
argument for $n=2$; it is clear from this argument that we may proceed inductively to prove the general case.
For $n=2$ we repeat the above proof we now obtain $\mu R(\mu,A_\infty )R(\lambda,A_\infty) 
= \mu  i_{A^2} R(\mu,A)R(\lambda,A) i_\infty$, where $i_\infty:H_\infty\embed E$ and 
$i_{A^2}: \Dom(A^2)\embed H_\infty$ are the inclusion mappings. It follows that 
$\mu R(\mu,A_\infty )R(\lambda,A_\infty) $ is compact for each $\mu\in\varrho(A_\infty)$.
Passing to the limit $\mu\to\infty$, noting that by the resolvent identity we have
\begin{align*} 
\ & \Big\n \mu R(\mu,A_\infty )R(\lambda,A_\infty) - R(\lambda,A_\infty) \Big\n
\\ & \qquad = \Big\n \frac\mu{\mu-\lambda} (R(\lambda,A_\infty ) - R(\mu,A_\infty))  - R(\lambda,A_\infty) \Big\n
\\ & \qquad \le  \Big\n \frac\mu{\mu-\lambda}R(\mu,A_\infty) \Big\n + \Big\n\Big(\frac\mu{\mu-\lambda}-1 \Big) R(\lambda,A_\infty)\Big\n, 
\end{align*}
and using that $\n R(\nu,A_\infty)\n \le 1/\nu$, 
it follows that $ R(\lambda,A_\infty)$ is compact, being the uniform limit of compact operators.
\end{example}

% \begin{example} [The case $H_\infty\embed H$]
% Suppose that Assumptions \ref{ass:mu-infty} and \ref{ass:L-analytic} hold,
% and that we have a continuous inclusion $H_\infty\embed H$. The latter is equivalent to
% the existence of a constant $C$ such that $\lb Q_\infty x\s,x\s\rb \le C\lb Qx\s,x\s\rb$
% (cf. \cite{DaPZab, Nee98}).
% Then,
% $$ \lb A_\infty\s i_\infty\s x\s, i_\infty\s x\s\rb = \lb i_\infty i_\infty\s A\s ,x\s\rb
% = \lb Q_\infty A\s x\s,x\s\rb.
% $$
% Hence from the identity $AQ_\infty + Q_\infty A\s= -Q$ (see, e.g., \cite{GolNee}) we infer that
% $$ - \lb A_\infty\s i_\infty x\s, i_\infty\s x\s\rb = \frac12 \lb Qx\s,x\s\rb \ge \frac1{2C} \lb Q_\infty x\s,x\s\rb
% = \lb i_\infty x\s, i_\infty\s x\s\rb.$$
% It follows that $\n A_\infty\s i_\infty\s x\s\n \ge \frac1{2C} \n i_\infty\s x\s\n$ and therefore $A_\infty\s $ has closed range,
% and the $L^p$-Poincar\'e inequalities for $D_H$ and $D_H\s$ hold for $1<p<\infty$.  
% \end{example}

\section{Compactness results}\label{sec:comp}

In \cite{CMG}, a condition equivalent to the Poincar\'e inequality has been used to prove, under an 
additional Hilbert-Schmidt assumption, the compactness of the semigroup
$P\otimes S_H\s$ on $L^p(E,\mu_\infty;H)$. The importance of this semigroup
is apparent from the proof of Theorem \ref{thm:poncare-DHs} and 
the results in \cite{CMG, ChoGol01, MaaNee09, MaaNee11} where
this semigroup plays a crucial r\^ole in identifying the domains of $\sqrt{-L}$ and $L$.
Here we wish to show that the compactness of this semigroup and its resolvent can be deduced under quite minimal assumptions. 

We begin with a lemma which is based on the classical result of Paley \cite{Pal} and Marcinkiewicz and Zygmund \cite{MarZyg}
(see also \cite{Rub}) 
 that if $T$ is a bounded operator on a space $L^p(\nu)$ and if $H$ is a Hilbert space, 
 then $T\otimes I$ is bounded on $L^p(\nu;H)$ and 
$\n T\otimes I\n  = \n T\n.$ As a direct consequence, if $S$ is a bounded operator on $H$,
then $T\otimes S = (T\otimes I)\circ (I\otimes S)$ is bounded on $L^p(\nu;H)$ and
$\n T\otimes S\n \le \n T\n\n S\n$.

\begin{lemma}\label{lem:compact-TS} Let $1\le p<\infty$.
 If $T$ is compact on $L^p(\nu)$ and $S$ is compact on $H$, then $T\otimes S$ is compact on $L^p(\nu;H)$. 
\end{lemma}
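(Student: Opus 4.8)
The statement asserts that the tensor product $T \otimes S$ of a compact operator $T$ on $L^p(\nu)$ and a compact operator $S$ on $H$ is compact on $L^p(\nu;H)$. Let me sketch how I would approach this.

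Let me think about the structure here. We have $L^p(\nu; H)$, the Bochner space of $H$-valued $p$-integrable functions. The operator $T \otimes S$ should act as follows: on elementary tensors $\phi \otimes h$ (where $\phi \in L^p(\nu)$ and $h \in H$), it sends $\phi \otimes h \mapsto (T\phi) \otimes (Sh)$. By the Paley–Marcinkiewicz–Zygmund result cited just before the lemma, this extends to a bounded operator with $\|T \otimes S\| \le \|T\|\|S\|$.

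Now, how to prove compactness. The cleanest approach is approximation by finite-rank operators. Since $S$ is compact on the Hilbert space $H$, it is the norm-limit of finite-rank operators $S_n$; in fact, being compact on a Hilbert space, $S$ has a particularly nice structure — by the singular value decomposition, $S_n = \sum_{k=1}^n \sigma_k \langle \cdot, e_k\rangle f_k$ for orthonormal systems. So I can write $S = \lim_n S_n$ in operator norm with each $S_n$ finite-rank.

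The key observation is the factorization already noted in the text: $T \otimes S = (T \otimes I) \circ (I \otimes S)$. I would exploit this. Consider $T \otimes S_n = (T \otimes I)\circ(I \otimes S_n)$. Since $\|(T\otimes I) \circ (I \otimes (S - S_n))\| \le \|T\| \cdot \|S - S_n\| \to 0$, the operators $T \otimes S_n$ converge to $T \otimes S$ in operator norm. Because the compact operators form a closed ideal (closed in operator norm), it suffices to show each $T \otimes S_n$ is compact. So the problem reduces to the case where $S$ has finite rank.

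Let me set up the reduction. Suppose $S$ has rank $m$, so $Sh = \sum_{k=1}^m \langle h, e_k\rangle f_k$ for fixed vectors $e_k, f_k \in H$. Then for a general $F \in L^p(\nu; H)$, I compute $(I \otimes S)F = \sum_{k=1}^m \langle F(\cdot), e_k\rangle f_k$, where $\langle F(\cdot), e_k\rangle$ is a scalar function in $L^p(\nu)$. Applying $T \otimes I$ afterward gives $(T \otimes S)F = \sum_{k=1}^m (T\psi_k) \otimes f_k$, where $\psi_k := \langle F(\cdot), e_k\rangle \in L^p(\nu)$. Now here is the crux: the map $F \mapsto T\psi_k$ is the composition of the bounded map $F \mapsto \psi_k$ (from $L^p(\nu;H)$ to $L^p(\nu)$) with the compact operator $T$, hence is compact. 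A finite sum of compact operators (the $f_k$ being fixed, so $g \mapsto g \otimes f_k$ is bounded) is compact, and therefore $T \otimes S$ is compact when $S$ has finite rank.

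The main obstacle is purely bookkeeping: verifying that the map $F \mapsto \langle F(\cdot), e_k\rangle$ really is bounded from $L^p(\nu;H)$ into $L^p(\nu)$ (immediate from Cauchy–Schwarz pointwise, giving $\|\langle F, e_k\rangle\|_{L^p} \le \|e_k\|_H \|F\|_{L^p(\nu;H)}$) and that the elementary-tensor formula for $T \otimes S$ genuinely agrees with this factorized action on all of $L^p(\nu;H)$, not merely on the dense span of elementary tensors. Once the finite-rank case is settled by exhibiting $T \otimes S$ as a finite sum of compact operators, the general case follows immediately from the norm-approximation argument together with the closedness of the compact operators in the operator-norm topology.
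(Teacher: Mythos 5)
Your proof is correct, but it takes a genuinely different route from the paper's. The paper approximates \emph{both} factors by finite-rank operators: after reducing to the separable case, it invokes the approximation property of separable $L^p(\nu)$ (via Lindenstrauss--Tzafriri) to produce a finite-rank $T'$ with $\n T-T'\n<\eps$, takes a finite-rank $S'$ with $\n S-S'\n<\eps$, and observes that $T'\otimes S'$ is finite rank and norm-close to $T\otimes S$, so that $T\otimes S$ is a norm limit of finite-rank operators. You instead approximate only $S$, and for finite-rank $S$ you exhibit $T\otimes S$ directly as the finite sum $F\mapsto \sum_{k=1}^m \bigl(T\lb F(\cdot),e_k\rb\bigr)\otimes f_k$, each summand being compact because it factors as (bounded) $\circ\, T \,\circ$ (bounded); the general case then follows from the norm-closedness of the compact operators, since $\n T\otimes(S-S_n)\n\le\n T\n\,\n S-S_n\n$. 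What your argument buys is that the approximation property of $L^p(\nu)$, and the attendant separability reduction, are never needed: compactness of $T$ enters only through the ideal property of compact operators, never through finite-rank approximation of $T$, so the argument is more self-contained and would apply verbatim with the first factor replaced by any Banach space on which $T\otimes I$ is bounded. In exchange, the paper's argument is shorter, symmetric in the two factors, and yields the formally stronger conclusion that $T\otimes S$ is a norm limit of finite-rank operators. Note that both proofs still rest on the Paley--Marcinkiewicz--Zygmund bound $\n T\otimes I\n=\n T\n$ recalled before the lemma (you use it precisely in the approximation step), so neither dispenses with that ingredient.
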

\begin{proof}
Since compactness can be tested sequentially, there is no loss of generality in assuming that both $L^p(\nu)$ and $H$ are separable.
Since separable spaces $L^p(\nu)$ have the approximation property, by \cite[Theorem 1.e.4]{LinTza} there is a finite rank operator $T'$ on $L^p(\nu)$
such that $\n T-T'\n<\eps$. Similarly there is a finite rank operator $S'$ on $H$
such that $\n S-S'\n<\eps$. Then $T'\otimes S'$ is a finite rank operator on $L^p(\nu;H)$
and 
$$ \n T'\otimes S'-T\otimes S\n \le \n T'\otimes (S'-S)\n + \n (T'-T)\otimes S\n
\le  \eps((\n T\n +\eps) + \n S\n).$$
This proves that $T\otimes S$ can be uniformly approximated by finite rank operators.
\end{proof}

We now return to the setting of the previous section. Since a semigroup which is norm continuous
for $t>0$ is compact for $t>0$ if and only if its resolvent operators are compact, Lemma \ref{lem:compact-TS}
implies:

\begin{proposition} Let $1<p<\infty$ and suppose that Assumptions \ref{ass:mu-infty} and \ref{ass:L-analytic} hold. 
 If $P$ has compact resolvent on $L^p(E,\mu_\infty)$, then
$P\otimes S_H\s$ has compact resolvent on $L^p(E,\mu_\infty;H)$. 
\end{proposition}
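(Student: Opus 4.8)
The plan is to reduce the assertion about the resolvent of the generator of $\underline P = P\ot S_H\s$ to a statement about the compactness of the individual semigroup operators $\underline P(t)$, and then to factor each of these through the tensor product and invoke Lemma \ref{lem:compact-TS}. Since $\underline P$ is a bounded analytic $C_0$-semigroup on $L^p(E,\mu_\infty;H)$, it is norm continuous for $t>0$, and for such semigroups the compactness of the resolvent of the generator is equivalent to the compactness of $\underline P(t)$ for every $t>0$ (see \cite[Theorem 4.29]{EngNag}). Thus it suffices to prove that each operator $\underline P(t) = P(t)\ot S_H\s(t)$ with $t>0$ is compact on $L^p(E,\mu_\infty;H)$.

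First I would establish the compactness of the two tensor factors separately. On the one hand, $P$ is analytic and has compact resolvent by hypothesis, so $P(t)$ is compact on $L^p(E,\mu_\infty)$ for every $t>0$, again by \cite[Theorem 4.29]{EngNag}. On the other hand, the hypothesis that $P$ has compact resolvent means that its generator $L$ has compact resolvent, which is precisely condition (1) of Theorem \ref{thm:poincare-compact}; by the chain of equivalences established there it implies condition (6), namely that $S_H$ is compact on $H$. By Schauder's theorem the adjoint $S_H\s(t)$ is then compact on $H$ for every $t\ge 0$.

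With both factors in hand, Lemma \ref{lem:compact-TS}, applied with $\nu=\mu_\infty$, $T=P(t)$ and $S=S_H\s(t)$, shows that $\underline P(t) = P(t)\ot S_H\s(t)$ is compact on $L^p(E,\mu_\infty;H)$ for every $t>0$. Combining this with the reduction of the first paragraph yields that the generator of $\underline P$ has compact resolvent, which is the claim.

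I do not expect any serious obstacle here, since all the substantial work has already been done: the nontrivial inputs are the chain of equivalences in Theorem \ref{thm:poincare-compact} (which themselves rest on the uniform exponential stability of $S_\infty\s$ and the second-quantisation structure) and the tensorial compactness packaged in Lemma \ref{lem:compact-TS}. The only point requiring a modicum of care is the passage between compactness of the resolvent and compactness of the semigroup operators, which is legitimate precisely because $\underline P$, $P$ and $S_H$ are all analytic and hence eventually norm continuous.
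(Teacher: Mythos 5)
Your proof is correct and takes essentially the same route as the paper, which likewise combines the equivalence (valid for semigroups that are norm continuous for $t>0$) between compactness of the resolvent and compactness of the semigroup operators with Lemma \ref{lem:compact-TS}, the compactness of $S_H$ being drawn from the equivalences of Theorem \ref{thm:poincare-compact}. One trivial quibble: $S_H\s(t)$ is compact only for $t>0$, not for all $t\ge 0$ (since $S_H\s(0)=I$), but this is all your argument actually uses.
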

The generator of $P\otimes S_H\s$ equals $L\otimes I + I\otimes A_H\s$. As we have seen, the compactness of the 
resolvent of $L$ implies the compactness of the resolvent $A_H\s$. 
Thus the proposition suggests the more general problem whether $A\otimes I+I\otimes B$ 
has compact resolvent if $A$ and $B$ have compact resolvents.
Our final result gives an affirmative answer for sectorial operators $A$ and $B$  of angle $<\frac12\pi$. Recall that 
a densely defined closed linear operator $A$ is said to be {\em sectorial operator of angle $<\frac12\pi$}
if there exists an angle $0<\theta<\frac12\pi$ such that $\{|\arg z| > \theta\}\subseteq\varrho(A)$ and
$\sup_{\{|\arg z| > \theta\}}\n z R(z,A)\n < \infty$.

\begin{proposition}\label{prop:A+B} Let $1\le p<\infty$ and suppose that $A$ and $B$ are sectorial operators of 
angle $<\frac12\pi$ on $L^p(\nu)$ and $H$, respectively. 
If, for some $w_0\in\varrho(A)$ and $z_0\in \varrho(A)$, the operators $R(w_0,A)$ and $ R(z_0,B)$ are compact, 
then  $A\otimes I + I \otimes B$ has compact resolvent on $L^p(\nu;H)$.
\end{proposition}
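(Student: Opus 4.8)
The plan is to realise the operator $A\otimes I + I\otimes B$ as the negative generator of the tensor product semigroup $T(t):= e^{-tA}\otimes e^{-tB}$ on $L^p(\nu;H)$, and then to derive compactness of its resolvent from compactness of the individual operators $T(t)$ via Lemma~\ref{lem:compact-TS} and \cite[Theorem 4.29]{EngNag}. Since $A$ and $B$ are sectorial of angle $<\frac12\pi$, the operators $-A$ and $-B$ generate bounded analytic $C_0$-semigroups $(e^{-tA})_{t\ge0}$ and $(e^{-tB})_{t\ge0}$; in particular each is norm continuous for $t>0$. Because $R(w_0,A)$ and $R(z_0,B)$ are compact, \cite[Theorem 4.29]{EngNag} shows that $e^{-tA}$ and $e^{-tB}$ are compact for every $t>0$.

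First I would check that $T$ is a bounded $C_0$-semigroup. The identity $(U_1\otimes V_1)(U_2\otimes V_2) = U_1U_2\otimes V_1V_2$ gives the semigroup law and $T(0)=I$; the submultiplicative estimate $\n U\otimes V\n\le \n U\n\,\n V\n$ recorded before Lemma~\ref{lem:compact-TS}, together with the uniform boundedness of the two analytic semigroups, gives $\sup_{t\ge0}\n T(t)\n<\infty$; and strong continuity at $t=0$ holds on the dense algebraic tensor product $L^p(\nu)\otimes H$ and hence, by uniform boundedness, on all of $L^p(\nu;H)$. Differentiating $t\mapsto T(t)(g\otimes h)$ at $t=0$ for $g\in\Dom(A)$ and $h\in\Dom(B)$ shows that the generator of $T$ extends $-(A\otimes I + I\otimes B)$ on the $T$-invariant dense subspace $\Dom(A)\otimes\Dom(B)$, which is therefore a core; this identifies the generator of $T$ with $-\overline{A\otimes I + I\otimes B}$ and fixes the interpretation of the operator in the statement.

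Next I would verify that $T$ is immediately norm continuous. Writing $T(t)-T(s) = e^{-tA}\otimes(e^{-tB}-e^{-sB}) + (e^{-tA}-e^{-sA})\otimes e^{-sB}$ and applying $\n U\otimes V\n\le\n U\n\,\n V\n$ reduces the claim to the norm continuity of the two factor semigroups on $(0,\infty)$, which we already have. By Lemma~\ref{lem:compact-TS}, $T(t)=e^{-tA}\otimes e^{-tB}$ is compact for every $t>0$, being the tensor product of a compact operator on $L^p(\nu)$ and a compact operator on $H$. Since $T$ is immediately norm continuous and $T(t)$ is compact for all $t>0$, \cite[Theorem 4.29]{EngNag} yields that the generator of $T$ has compact resolvent; as an operator has compact resolvent if and only if its negative does, it follows that $A\otimes I + I\otimes B$ (that is, its closure) has compact resolvent.

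I expect the main obstacle to be the passage from compactness of each individual operator $T(t)$, $t>0$, to compactness of the resolvent, i.e.\ the applicability of \cite[Theorem 4.29]{EngNag}, whose hypothesis is immediate norm continuity of $T$. The genuinely new point is that this property is inherited by the tensor product semigroup from its two scalar factors, which rests entirely on the submultiplicative bound $\n U\otimes V\n\le\n U\n\,\n V\n$. Underlying the conclusion is the fact that for $\Re\lambda>0$ the resolvent of the generator equals the Bochner integral $\int_0^\infty e^{-\lambda t}T(t)\,dt$, which converges in operator norm precisely because $T$ is norm continuous on $(0,\infty)$ and uniformly bounded; being a norm-convergent integral of compact operators, it is then compact.
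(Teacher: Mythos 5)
Your proof is correct, but it takes a genuinely different route from the paper's. The paper argues at the level of resolvents: it represents $R(\lambda,A\otimes I+I\otimes B)$ by the double contour integral \eqref{eq:AB} coming from the Kalton--Weis joint functional calculus (the Bianchi--Favella formula), splits the contours into a bounded piece and a tail, shows the bounded piece is compact using Lemma \ref{lem:compact-TS} together with a theorem of Voigt on integrals with values in the compact operators, makes the tail small in norm, and concludes that the resolvent is a uniform limit of compact operators. You argue instead at the level of semigroups: you form $T(t)=e^{-tA}\otimes e^{-tB}$, identify its negative generator with (the closure of) $A\otimes I+I\otimes B$ via an invariant-core argument, deduce immediate compactness of $T$ from Lemma \ref{lem:compact-TS}, and invoke \cite[Theorem 4.29]{EngNag}. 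This is precisely the mechanism the paper itself uses for the preceding Proposition on $P\otimes S_H\s$, so your argument treats the two compactness statements uniformly and is more elementary: no contour integration, no joint functional calculus, no appeal to Voigt. The price is that you must construct the tensor product semigroup and justify the generator identification (strong continuity, density and invariance of $\Dom(A)\otimes\Dom(B)$), thereby fixing the interpretation of the operator sum yourself; the paper's formula \eqref{eq:AB} does this implicitly within the Kalton--Weis framework. Conversely, the contour argument is more flexible: as the paper's closing remarks point out, it applies verbatim to resolvent-commuting sectorial operators on an arbitrary Banach space under the weaker hypothesis that only the product $R(w_0,A)R(z_0,B)$ is compact; your factor-by-factor argument reaches that refinement only after an extra factorization such as $e^{-tA}e^{-tB}=\bigl[(w_0-A)e^{-tA}\bigr]\bigl[(z_0-B)e^{-tB}\bigr]R(w_0,A)R(z_0,B)$, which exploits the analyticity bound $\n Ae^{-tA}\n\lesssim 1/t$. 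Two minor points on your write-up: the separate verification of immediate norm continuity is redundant, since immediate compactness already implies immediate norm continuity (this is part of the equivalence in \cite[Theorem 4.29]{EngNag}); and your application of that theorem to the factors tacitly uses that compactness of $R(w_0,A)$ and $R(z_0,B)$ at single points propagates to all of $\varrho(A)$ and $\varrho(B)$ by the resolvent identity --- standard, but worth a sentence.
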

\begin{proof}
Fix numbers $\omega_A<\theta_A <\frac12\pi$, $\omega_B<\theta_B <\frac12\pi$, where $\omega_A$ and $\omega_B$ denote the angles 
of sectoriality of $A$ and $B$. Fix $\lambda\in\C$ with $|\arg \lambda|>\theta$ and fix a number $0<r<|\lambda|$. 
Let $\gamma_{A,r}$ and $\gamma_{B,r}$ 
be the downwards oriented
boundaries of $\{|z|<r\}\cup\{|\arg z|<\theta_A\}$ and  $\{|z|<r\}\cup\{|\arg z|<\theta_B\}$.
It follows from \cite[Formulas (2.2), (2.3)]{KalWei} and a limiting argument that 
 \begin{align}\label{eq:AB} R(\lambda,A\otimes I+B\otimes I) = \frac1{(2\pi i)^2}\int_{\gamma_{B,r}}\int_{\gamma_{A,r}} 
  \frac{1}{\lambda - (w+z)} R(w,A)\otimes R(z,B)\,dw\,dz;
  \end{align}
note that the double integral on the right-hand side converges absolutely.

Given $\e>0$ fix $R>r$ so large that 
$$\Big\n\frac1{(2\pi i)^2}\int_{\gamma_{B,r}\cap \complement B_R}\int_{\gamma_{A,r}\cap \complement B_R} 
 \frac{1}{\lambda - (w+z)} R(w,A)\otimes R(z,B)\,dw\,dz \Big\n < \e,
$$ where $B_R = \{z\in \C: |z|<\R\}$ and $\complement B_R$ is its complement.
By Lemma \ref{lem:compact-TS} and \cite[Theorem 1.3]{Voi} the operator 
$$\frac1{(2\pi i)^2}\int_{\gamma_{B,r}\cap  B_R}\int_{\gamma_{A,r}\cap B_R} 
 \frac{1}{\lambda - (w+z)} R(w,A)\otimes R(z,B)\,dw\,dz$$
 is compact, as it is the strong integral over a finite measure space of an integrand with values in the space of
 compact operators.
As a consequence, for each $\e>0$ we obtain that $R(\lambda,A\otimes +I\otimes B) = K_\e + L_\e$ with $K_\e$ compact and 
$L_\e$ bounded with $\n L_\e \n < \e$. It follows that the range of the unit ball of $L^p(\nu;H)$ under 
$R(\lambda,A\otimes I+I\otimes B)$ is totally bounded and therefore relatively compact.
\end{proof}
The formula \eqref{eq:AB} for the resolvent of the sum of two operators goes back to Bianchi and Favella \cite{BiaFav} who considered bounded $A$ and $B$.
It can be viewed as a special instance of the so-called joint functional 
calculus for sectorial operators; see \cite[Theorem 2.2]{LLM}, \cite[Theorem 12.12]{KunWei}.

\begin{remark}
 The above proof easily extends to tensor products of $C_0$-semigroups on arbitrary Banach spaces, provided one makes 
 appropriate assumptions on the boundedness of the tensor products of the various bounded operators
 involved.
\end{remark}

\begin{remark}
 The same proof may be used to see that if $A$ and $B$ are resolvent commuting sectorial operators of 
angle $<\frac12\pi$ on a Banach space $X$ and if,
for some $w_0\in\varrho(A)$ and $z_0\in \varrho(A)$, the operator $R(w_0,A) R(z_0,B)$ is compact
on $X$, then  $A+ B$ has compact resolvent on $X$.
\end{remark}

\noindent{\em Acknowledgment} -- I thank Ben Goldys and Jan Maas for providing helpful comments
and the anonymous referee for suggesting some improvements.

\bibliographystyle{plain}
\bibliography{references-poincare}

\end{document}